\newtheorem{theorem}{Theorem}[section]
\newtheorem{lemma}[theorem]{Lemma}
\theoremstyle{definition}
\newtheorem{remark}{Remark}[section]
\theoremstyle{definition}
\newtheorem{example}{Example}[section]
\theoremstyle{definition}
\newtheorem{definition}{Definition}[section]
\begin{document}
\title{Steklov-Dirichlet spectrum: stability, optimization and continuity of eigenvalues}
\author[]{Marco Michetti}

\address[Marco Michetti]{
Institut Elie Cartan de Lorraine \\ CNRS UMR 7502 and Universit\'e de Lorraine \\
BP 70239
54506 Vandoeuvre-l\`es-Nancy, France}
\email{marco.michetti@univ-lorraine.fr}

\date{}

\maketitle
\begin{abstract}
In this paper we study the Steklov-Dirichlet eigenvalues $\lambda_k(\Omega,\Gamma_S)$, where $\Omega\subset \mathbb{R}^d$ is a domain and $\Gamma_S\subset \partial \Omega$ is the subset of the boundary in which we impose the Steklov conditions. After a first discussion about the regularity properties of the Steklov-Dirichlet eigenfunctions we obtain a stability result for the eigenvalues. We study the optimization problem under a measure constraint on the set $\Gamma_S$, we prove the existence of a minimizer and the non-existence of a maximizer. In the plane we prove a continuity result for the eigenvalues imposing a bound on the number of connected components of the sequence $\Gamma_{S,n}$, obtaining in this way a version of the famous result of V. \v{S}ver\'ak (\cite{S93}) for the Steklov-Dirichlet eigenvalues. Using this result we prove the existence of a maximizer under the same topological constraint and the measure constraint.  
\end{abstract}
\tableofcontents

\section{Introduction and Main Results}
Let $\Omega\subset \mathbb{R}^d$ be a bounded, open, connected set with   Lipschitz boundary. Let  $\Gamma_S\subset \partial \Omega$ be a relative open submanifold with Lipschitz boundary and we define also $\Gamma_D=\partial \Omega\setminus \Gamma_S$. We consider the following mixed Steklov-Dirichlet eigenvalue problem:
\begin{equation}\label{eSD}
\begin{cases}
     \Delta u=0\quad  &\Omega   \\
      \partial_{\nu} u=\lambda(\Omega,\Gamma_S)u \quad &\Gamma_S \\
      u=0 \quad &\Gamma_D,
\end{cases}
\end{equation}
where $\nu$ stands for the outer unit normal. It is known (see \cite{A06}) that the Steklov-Dirichlet eigenvalue problem \eqref{eSD} has a discrete spectrum $\{\lambda_k(\Omega,\Gamma_S) \}^{\infty}_{k=1}$ 
\begin{equation*}
0< \lambda_1(\Omega,\Gamma_S)\leq \lambda_2(\Omega,\Gamma_S)\leq \lambda_3(\Omega,\Gamma_S)\leq \cdots \rightarrow +\infty,
\end{equation*}
and the eigenvalues admit the following variational characterization:
\begin{equation}\label{eVC}
\lambda_k(\Omega,\Gamma_S)= \inf_{V_k\subset H_0^1(\Omega,\Gamma_S)} \sup_{v\in V_k} \frac{\int_{\Omega}|\nabla v|^2 dx}{\int_{\Gamma_S}v^2d\mathcal{H}^{d-1}},
\end{equation}
where the infimum is taken over all $k-$dimensional subspaces $V_k$ of the space $H_0^1(\Omega,\Gamma_S)= \{ v\in H^1(\Omega) | \; v\equiv 0\; \text{on}  \; \Gamma_D \}$, where the equality on the boundary is intended in the sense of trace. We denote the corresponding eigenfunctions by $\{u_k \}^{\infty}_{k=1}$ and we know that the following relation holds
\begin{equation*}
\lambda_k(\Omega,\Gamma_S)= \frac{\int_{\Omega}|\nabla u_k|^2 dx}{\int_{\Gamma_S}u_k^2d\mathcal{H}^{d-1}}.
\end{equation*}
The main purpose of this work is to study the dependence of the eigenvalues $\lambda_k(\Omega,\Gamma_S)$ with respect to $\Gamma_S\subset \partial \Omega$.

This kind of mixed eigenvalues has been deeply studied. For instance in \cite{HL20} bounds for the Riesz mean has been obtained, in \cite{BKPS10} the authors obtained inequalities between Steklov-Dirichlet eigenvalues and Steklov-Neumann eigenvalues, in \cite{LPPS17} the authors proved a two terms asymptotic formula and in \cite{GPPS21, F21, VS20} optimization of the first Steklov-Dirichlet eigenvalue on doubly connected domains has been studied.

The Steklov-Dirichlet eigenvalues and eigenfunctions are used to model some important physical process (see \cite{LPPS17, BKPS10, B01}) e. g. they describe the stationary heat distribution in $\Omega$ when the flux through $\Gamma_S$ is proportional to the temperature itself and the part $\Gamma_D$ is kept under zero temperature. The boundary value problem \eqref{eSD} has also interesting probabilistic interpratation (see \cite{BKPS10,BK04}).

We now describe the structure of the paper and the main results. In Section \ref{sre} we recall some useful results about the regularity of solutions of mixed boundary value problems, in particular we prove that the Steklov-Dirichlet eigenfunctions are Besov functions.

In Section \ref{ssr} we prove a stability result for the Steklov-Dirichlet eigenvalues. More precisely we prove the following theorem (see Theorem \ref{tSL} in order to have more information about the constants $C_1$ and $C_2$) 
\begin{theorem}\label{tSL2}
Let $\Omega$ be a uniform $\mathcal{C}^{1,1}$ open set of $\mathbb{R}^d$, let $\Gamma_S\subset \partial \Omega$ and $\Gamma'_S\subset \partial \Omega$ two $\mathcal{C}^{1,1}$ relative open submanifolds such that $\mathcal{H}^{d-1}(\Gamma_S \cap \Gamma'_S)>0$ . We define the two sets $\Gamma_D=\partial \Omega\setminus \Gamma_S$ and $\Gamma'_D=\partial \Omega\setminus \Gamma'_S$ then 
\begin{itemize}
\item For $d\geq 3$ there exists a constant $C_1$ such that:
\begin{equation*}
|\lambda_k(\Omega,\Gamma_S)-\lambda_k(\Omega,\Gamma'_S)|\leq C_1\big (\mathcal{H}^{d-1}(\Gamma_S \triangle \Gamma'_S)^{\frac{1}{2d}}+d(\Gamma_D,\Gamma'_D)^{\frac{1}{2}}\big ).
\end{equation*} 
\item For $d=2$ there exists a constant $C_2$ such that:
\begin{equation*}
|\lambda_k(\Omega,\Gamma_S)-\lambda_k(\Omega,\Gamma'_S)|\leq C_2\big (\mathcal{H}^1(\Gamma_S \triangle \Gamma'_S)^{\frac{1}{2}}+d(\Gamma_D,\Gamma'_D)^{\frac{1}{2}}\big ).
\end{equation*} 
\end{itemize}
Where $d(\Gamma_D,\Gamma'_D)$ is the Hausdorff distance between the two sets.
\end{theorem}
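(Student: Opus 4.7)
The plan is to prove both inequalities by using the variational characterization \eqref{eVC}, modifying eigenfunctions for one problem into admissible trial functions for the other through a cut-off argument, and then carefully bounding the induced changes in the Rayleigh quotient with the help of the Besov regularity of the eigenfunctions established in Section \ref{sre}.

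First, let $u_1, \ldots, u_k$ be an $L^2(\Gamma_S)$-orthonormal family of eigenfunctions corresponding to $\lambda_1(\Omega,\Gamma_S), \ldots, \lambda_k(\Omega,\Gamma_S)$. In order to use a linear combination $u=\sum_j a_j u_j$ as a trial function in the min-max for $\lambda_k(\Omega,\Gamma'_S)$, it must vanish (in the trace sense) on $\Gamma'_D$. Since $u$ already vanishes on $\Gamma_D$, the only problematic part is $\Gamma'_D\setminus\Gamma_D=\Gamma_S\setminus\Gamma'_S$, which has small $\mathcal{H}^{d-1}$-measure together with $\Gamma'_S\setminus\Gamma_S$. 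I would therefore introduce a smooth cut-off $\varphi_\delta$ with $0\le\varphi_\delta\le 1$, vanishing in a $\delta$-tubular neighborhood of $\Gamma'_D$, equal to $1$ outside a $2\delta$-neighborhood, and satisfying $|\nabla\varphi_\delta|\le C/\delta$. Then $\varphi_\delta u\in H^1_0(\Omega,\Gamma'_S)$, and plugging the $k$-dimensional space $V_k=\mathrm{span}(\varphi_\delta u_1,\ldots,\varphi_\delta u_k)$ into \eqref{eVC} bounds $\lambda_k(\Omega,\Gamma'_S)$ by the worst Rayleigh quotient on $V_k$. The parameter $\delta$ will be optimised at the end.

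The key step is to compare the Rayleigh quotient of $\varphi_\delta u$ with $\sum a_j^2\lambda_j$. Expanding $|\nabla(\varphi_\delta u)|^2=\varphi_\delta^2|\nabla u|^2+2\varphi_\delta u\,\nabla\varphi_\delta\!\cdot\!\nabla u+u^2|\nabla\varphi_\delta|^2$, the error terms are supported in the thin $2\delta$-strip around $\Gamma'_D$. I would split this strip into two regions: near $\Gamma_D\cap\Gamma'_D$, where $u$ vanishes on $\Gamma_D$ and therefore is $L^\infty$-small by a factor of order $(\delta+d(\Gamma_D,\Gamma'_D))^\alpha$ thanks to the Besov/Hölder regularity of $u$ at the Dirichlet boundary proved in Section \ref{sre}; and near $\Gamma_S\setminus\Gamma'_S$, whose $(d-1)$-measure is controlled by $\mathcal{H}^{d-1}(\Gamma_S\triangle\Gamma'_S)$. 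Combining these with the bound $|\nabla\varphi_\delta|\le C/\delta$ produces an estimate of the form $\int_\Omega|\nabla(\varphi_\delta u)|^2\le\sum a_j^2\lambda_j+\mathrm{err}(\delta,d(\Gamma_D,\Gamma'_D),\mathcal{H}^{d-1}(\Gamma_S\triangle\Gamma'_S))$. A parallel analysis, now using the trace Sobolev embedding and Hölder's inequality on $\Gamma_S\triangle\Gamma'_S$, gives a lower bound on $\int_{\Gamma'_S}(\varphi_\delta u)^2\,d\mathcal{H}^{d-1}$ that differs from $\sum a_j^2$ by a similar error.

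The main technical obstacle, and the source of the dimensional dichotomy, is the trace estimate on the set $\Gamma_S\triangle\Gamma'_S$. In dimension two the Besov regularity yields Hölder control of $u|_{\partial\Omega}$, giving a clean factor $\mathcal{H}^1(\Gamma_S\triangle\Gamma'_S)^{1/2}$; for $d\ge 3$ only an $L^p(\partial\Omega)$ trace is available with a $d$-dependent exponent, and Hölder's inequality combined with optimisation in $\delta$ produces the weaker exponent $\mathcal{H}^{d-1}(\Gamma_S\triangle\Gamma'_S)^{1/(2d)}$, while the cut-off width contributes the factor $d(\Gamma_D,\Gamma'_D)^{1/2}$ in both cases. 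Once $\delta$ is chosen to balance the two competing errors, the min-max principle yields $\lambda_k(\Omega,\Gamma'_S)\le\lambda_k(\Omega,\Gamma_S)+C(\mathcal{H}^{d-1}(\Gamma_S\triangle\Gamma'_S)^\gamma+d(\Gamma_D,\Gamma'_D)^{1/2})$, with $\gamma$ as in the statement, and swapping the roles of $\Gamma_S$ and $\Gamma'_S$ gives the reverse inequality and hence the theorem.
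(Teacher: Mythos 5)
Your approach (cut off the eigenfunctions of $(\Omega,\Gamma_S)$ near $\Gamma'_D$ and plug them into the min--max for $\lambda_k(\Omega,\Gamma'_S)$) is genuinely different from the paper's, which never cuts off: the paper builds harmonic test functions solving an auxiliary mixed problem with Neumann data $\lambda_k u_k$ on $\Gamma_S\cap\Gamma'_S$ (Lemma \ref{lTF}, with exact orthogonality built in) and then controls $\|u_k-v_k\|_{H^1(\Omega)}$ by the quantitative stability theorem for mixed boundary value problems (Theorem 4 in \cite{S97}), which is precisely where the $d(\Gamma_D,\Gamma'_D)^{1/2}$ term comes from. Unfortunately, as written your central quantitative step fails. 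The dominant error of the cut-off is $\int_\Omega u^2|\nabla\varphi_\delta|^2\,dx\le C\delta^{-2}\int_{S_\delta}u^2\,dx$, where $S_\delta$ is the $2\delta$-collar of $\Gamma'_D$; along the common region $\Gamma_D\cap\Gamma'_D$ the only uniform decay available is the H\"older-$\frac12$ bound $|u|\lesssim \operatorname{dist}(x,\Gamma_D)^{1/2}$ (and this exponent is sharp, cf.\ Example \ref{example}), so $u^2\lesssim\delta+d(\Gamma_D,\Gamma'_D)$ on a set of volume $\sim\delta\,\mathcal{H}^{d-1}(\Gamma_D)$, and the error is $\delta^{-2}\cdot\delta\cdot\delta=O(1)$: it does not tend to zero with $\delta$, let alone at the rate $\delta^{1/2}$ your balancing requires. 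In $d\ge 3$ the situation is worse, since $B^{3/2}_{2\,\infty}(\Omega)$ gives no pointwise/H\"older control at all (the embedding into $\mathcal{C}^{1/2}$ used in Theorem \ref{tRE} is purely two-dimensional), so the ``$L^\infty$-small near $\Gamma_D$'' step has no support there, and you offer no substitute for it.

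Two remarks on how one might repair this, which also show the gap is not merely cosmetic. First, no cut-off is needed near $\Gamma_D\cap\Gamma'_D$ at all: $u$ already vanishes there in the trace sense, so $\varphi_\delta$ only has to vanish near $\Gamma'_D\setminus\Gamma_D=\Gamma_S\setminus\Gamma'_S$; restricting the collar to that small set removes the $O(1)$ term, and in $d=2$ the H\"older bound combined with the fact that every point of $\Gamma_S\setminus\Gamma'_S\subset\Gamma'_D$ lies within $d(\Gamma_D,\Gamma'_D)$ of $\Gamma_D$ can then be exploited. But this is a different argument from the one you wrote, and even then you must (i) control the term $\delta^{-2}\int u^2$ over the collar of $\Gamma_S\setminus\Gamma'_S$, where in $d\ge3$ you only have trace $L^{2d/(d-1)}$ information and higher integrability of $\nabla u$ (Hardy-type or collar trace inequalities are needed, and your balancing in $\delta$ is not shown to reproduce the stated exponents), and (ii) verify near-orthogonality of the functions $\varphi_\delta u_1,\dots,\varphi_\delta u_k$ in both the Dirichlet form and $L^2(\Gamma'_S)$ so that the sup over the $k$-dimensional span is controlled by $\lambda_k(\Omega,\Gamma_S)$ plus the error (the paper gets this for free from the orthogonality enforced in Lemma \ref{lTF}). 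As it stands, the proposal does not prove the theorem.
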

In the first part of Section \ref{sep} we study maximization and minimization problems for the Steklov-Dirichlet eigenvalues, when we put a measure constraint on $\Gamma_S$. Similar problems where already studied in \cite{CU03} for Dirichlet-Neumann eigenvalues.

In particular we prove the following theorems
\begin{theorem}\label{tEM}
Let $\Omega\subset \mathbb{R}^d$ be a Lipschitz domain and let $0<m<1$ be a constant, then, for all $k$, the following variational problem 
\begin{equation*}
\inf \{ \lambda_k(\Omega,\Gamma_S) \;| \; \Gamma_S\subset \partial \Omega,\;\;  \mathcal{H}^{d-1}(\Gamma_S)=m\mathcal{H}^{d-1}(\partial \Omega) \},
\end{equation*}
has a solution.
\end{theorem}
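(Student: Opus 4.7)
My plan is to apply the direct method of the calculus of variations, combined with a relaxation to densities on $\partial\Omega$ and a bang-bang argument to recover an admissible set. I would start from a minimizing sequence $(\Gamma_{S,n})_n$ and identify each admissible set with its characteristic function $\phi_n := \mathbbm{1}_{\Gamma_{S,n}} \in L^\infty(\partial\Omega)$. The sequence $(\phi_n)_n$ lies in the weak-$*$ compact set
\begin{equation*}
\mathcal{A}_m := \left\{ \phi \in L^\infty(\partial\Omega) \,:\, 0 \leq \phi \leq 1,\ \int_{\partial\Omega} \phi \, d\mathcal{H}^{d-1} = m\mathcal{H}^{d-1}(\partial\Omega) \right\},
\end{equation*}
so up to a subsequence one has $\phi_n \rightharpoonup^{*} \phi^{*}$ for some $\phi^{*} \in \mathcal{A}_m$.

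Next, I would introduce a relaxed eigenvalue $\widetilde{\lambda}_k(\Omega, \phi)$ for $\phi \in \mathcal{A}_m$, defined by the same variational formula as \eqref{eVC} with $\phi$ in place of $\mathbbm{1}_{\Gamma_S}$ in the denominator and a suitably chosen trial space compatible with an essential Dirichlet condition on $\{\phi = 0\}$. The key intermediate step is the lower semicontinuity
\begin{equation*}
\widetilde{\lambda}_k(\Omega, \phi^{*}) \leq \liminf_{n\to\infty} \lambda_k(\Omega, \Gamma_{S,n}),
\end{equation*}
which would follow from the compactness of the trace operator $H^1(\Omega) \hookrightarrow L^2(\partial\Omega)$: the normalized eigenfunctions $u_j^{(n)}$ are bounded in $H^1(\Omega)$ and their traces converge strongly in $L^2(\partial\Omega)$, so combined with the weak-$*$ convergence of the weights, the boundary integrals $\int_{\partial\Omega} \phi_n (u_j^{(n)})^2 \, d\mathcal{H}^{d-1}$ pass to the limit and the limiting functions serve as admissible trial functions for the relaxed problem.

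The final and most delicate step is a bang-bang argument showing that the relaxed minimizer $\phi^{*}$ takes only the values $0$ and $1$. Perturbing $\phi^{*}$ by admissible variations $\psi$ with $\int_{\partial\Omega} \psi \, d\mathcal{H}^{d-1} = 0$ and $\phi^{*} + t\psi \in [0,1]$ for small $|t|$, the first-order optimality condition forces a nontrivial linear combination $\sum_j \alpha_j (u_j^{*})^2$ of squared eigenfunctions to be constant on the transition set $\{0 < \phi^{*} < 1\}$; the real analyticity in $\Omega$ of the harmonic eigenfunctions, together with a unique continuation argument up to the regular part of the boundary, then forces this transition set to have zero $\mathcal{H}^{d-1}$-measure, so $\phi^{*} = \mathbbm{1}_{\Gamma_S^{*}}$ for some admissible $\Gamma_S^{*}$ solving the original problem. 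The main obstacle is precisely this bang-bang step: the transition set need not be regular, and one has to handle carefully the interaction between the Steklov weight $\phi^{*}$ and the natural Dirichlet condition arising on $\{\phi^{*} = 0\}$ when writing the relaxed variational formulation so that it reduces to \eqref{eVC} whenever $\phi$ is itself a characteristic function.
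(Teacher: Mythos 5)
The decisive step of your plan --- the bang-bang argument recovering a characteristic function from the relaxed minimizer $\phi^{*}$ --- fails, because the relaxation you propose is not exact for this problem: its infimum is strictly below the infimum over genuine sets, so the relaxed minimizer is in general \emph{not} of the form $\chi_{\Gamma_S^{*}}$ and no first-order optimality or unique continuation argument can force it to be. Concretely, take $\phi \equiv m$ on $\partial\Omega$: it belongs to your class $\mathcal{A}_m$, the set $\{\phi=0\}$ is empty, so the trial space carries no Dirichlet condition, constants are admissible, and your relaxed eigenvalue satisfies $\widetilde{\lambda}_1(\Omega,\phi)=0$; on the other hand $\lambda_1(\Omega,\Gamma_S)>0$ for every admissible set, and the infimum over sets is positive (it is in fact attained). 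This is exactly the instability of the Dirichlet region $\{\phi=0\}$ under weak-$*$ limits of $\chi_{\Gamma_{S,n}}$ that the paper exploits in the opposite direction (Lemma \ref{lGL} and Theorem \ref{tNEU}) to show that \emph{maximizers} do not exist: the weak-$*$ limit of characteristic functions of spreading sets is a strictly positive density and all memory of the Dirichlet part is lost. In addition, even on the formal level, the Lagrange-multiplier condition you invoke would give that $\sum_j\alpha_j(u_j^{*})^2$ is constant on the transition set, which is not a configuration excluded by unique continuation (eigenvalue multiplicity and the non-smooth dependence of the trial space on $\phi$ make this step problematic anyway).

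The paper's proof avoids relaxation over densities altogether and uses a one-sided mechanism that is available only for the \emph{minimization} problem: normalize the eigenfunctions $u_{i,\epsilon}$ of a minimizing sequence, extract weak $H^1$ and strong $L^2(\partial\Omega)$ limits $u_i$ (orthogonality of gradients is preserved via compensated compactness), define the candidate set as the positivity set $\Gamma=\{x\in\partial\Omega:\sum_{i=1}^k u_i(x)^2>0\}$, and use $\mathrm{span}[u_1,\dots,u_k]$ as a test space in \eqref{eVC} to get $\lambda_k(\Omega,\Gamma)\leq\liminf_\epsilon\lambda_k(\Omega,\Gamma_{S,\epsilon})$. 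Fatou's lemma only gives $\mathcal{H}^{d-1}(\Gamma)\leq m\mathcal{H}^{d-1}(\partial\Omega)$, and the measure constraint is then restored by \emph{enlarging} $\Gamma$ to a set of exact measure $m\mathcal{H}^{d-1}(\partial\Omega)$, which can only decrease $\lambda_k$ by the monotonicity Lemma \ref{lME}. If you want to salvage your approach, you would need either this monotonicity-based repair or a much finer (capacitary, $\gamma$-convergence type) relaxation that keeps track of the Dirichlet regions, not just of the weak-$*$ limit of the weights.
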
 

\begin{theorem}\label{tNEU}
Let $\Omega\subset \mathbb{R}^d$ be a Lipschitz domain and let $0<m<1$ be a constant, then the following equality holds 
\begin{equation*}
\sup \{ \lambda_k(\Omega,\Gamma_S) \;| \; \Gamma_S\subset \partial \Omega,\;\; \mathcal{H}^{d-1}(\Gamma_S)=m\mathcal{H}^{d-1}(\partial \Omega) \}=+\infty.
\end{equation*}
\end{theorem}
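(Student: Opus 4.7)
The plan is to exhibit a sequence of admissible sets $\Gamma_{S,n}$ satisfying $\mathcal{H}^{d-1}(\Gamma_{S,n}) = m \, \mathcal{H}^{d-1}(\partial \Omega)$ for which $\lambda_1(\Omega, \Gamma_{S,n}) \to +\infty$. Since $\lambda_k \geq \lambda_1$ for every $k$, this yields the conclusion for all eigenvalues at once. The heuristic is that by fragmenting $\Gamma_{S,n}$ into many small pieces, each one isolated inside a portion of the Dirichlet set on which every admissible test function vanishes, a localized Poincar\'e--trace inequality forces $\int_{\Gamma_{S,n}} v^2 \, d\mathcal{H}^{d-1}$ to be small compared with $\int_\Omega |\nabla v|^2 \, dx$; the variational characterization \eqref{eVC} then converts this into a divergent lower bound for $\lambda_1$.

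For the construction I would fix a finite atlas of bi-Lipschitz charts of $\partial\Omega$ and decompose $\partial\Omega$ into $n$ cells $C_{i,n}$ of equal $\mathcal{H}^{d-1}$-measure and linear size comparable to $\epsilon_n := c_\Omega\,n^{-1/(d-1)}$. Inside each cell I place a concentric Steklov patch $A_{i,n} \subset C_{i,n}$ of measure $m\,\mathcal{H}^{d-1}(\partial\Omega)/n$, chosen through the chart as a bi-Lipschitz image of a $(d-1)$-dimensional ball, so that $A_{i,n}$ is separated from $\partial C_{i,n}$ by a Dirichlet annulus of width $\asymp \epsilon_n$. Setting $\Gamma_{S,n} := \bigcup_i A_{i,n}$ gives an admissible Steklov set of the prescribed measure with Lipschitz boundary in $\partial\Omega$. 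For $n$ large every $C_{i,n}$ lies in a single chart, and above it I would build a Lipschitz subdomain $R_{i,n} \subset \Omega$ of diameter $\asymp\epsilon_n$ whose boundary contains $A_{i,n}$ together with its surrounding Dirichlet annulus; placing these in disjoint columns above the cells ensures the $R_{i,n}$ are pairwise disjoint.

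The analytic core is a scaled Poincar\'e--trace inequality. On a fixed reference Lipschitz domain $R$ with a prescribed piece $\Gamma^R_D \subset \partial R$ of positive $\mathcal{H}^{d-1}$-measure, the trace theorem combined with a Poincar\'e inequality (valid since $\tilde v$ vanishes on a set of positive $\mathcal{H}^{d-1}$-measure) yields
\begin{equation*}
\int_{\partial R} \tilde v^2 \, d\mathcal{H}^{d-1} \leq C \int_R |\nabla \tilde v|^2 \, dx, \qquad \tilde v \in H^1(R), \ \tilde v \equiv 0 \text{ on } \Gamma^R_D.
\end{equation*}
The change of variables $y \mapsto \epsilon_n y$ transfers this to $R_{i,n}$, and a direct scaling computation then produces
\begin{equation*}
\int_{A_{i,n}} v^2 \, d\mathcal{H}^{d-1} \leq C\,\epsilon_n \int_{R_{i,n}} |\nabla v|^2 \, dx, \qquad \forall\, v \in H^1_0(\Omega, \Gamma_{S,n}),
\end{equation*}
with $C$ independent of $n$ and $i$.

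Summing over $i$ and using the disjointness of the $R_{i,n}$ gives $\int_{\Gamma_{S,n}} v^2 \, d\mathcal{H}^{d-1} \leq C\,\epsilon_n \int_\Omega |\nabla v|^2 \, dx$, so by \eqref{eVC} one obtains
\begin{equation*}
\lambda_1(\Omega, \Gamma_{S,n}) \geq \frac{1}{C\,\epsilon_n} \asymp n^{1/(d-1)} \xrightarrow[n \to \infty]{} +\infty.
\end{equation*}
The main technical obstacle is arranging the pairwise disjoint neighborhoods $R_{i,n}$ in a way compatible with the Lipschitz geometry of $\partial\Omega$ and ensuring that the reference Poincar\'e--trace constant is uniform in $n$ and $i$; this reduces, via the finite atlas and the local graph representation of the boundary, to a standard construction, while the scaling in $\epsilon_n$ supplies all the quantitative blow-up.
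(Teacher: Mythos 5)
Your proof is correct in substance but follows a genuinely different route from the paper. The paper also fragments the Steklov region into many small pieces spread over the boundary (Lemma \ref{lGL}), but it obtains the blow-up softly, by contradiction: if $\lambda_1(\Omega,\Gamma_{S,n})$ stayed bounded, the normalized eigenfunctions would be bounded in $H^1(\Omega)$, hence strongly convergent in $L^2(\partial\Omega)$ by compactness of the trace, and testing the normalization $\|u_{1,n}\|_{L^2(\partial\Omega)}=1$ against the weak-$*$ convergence $\chi_{\Gamma_{S,n}}\overset{*}{\rightharpoonup} m\chi_{\partial\Omega}$ forces $1=m<1$; this gives no rate. Your argument is instead quantitative: the rescaled Poincar\'e--trace inequality on the small columns $R_{i,n}$ gives $\int_{\Gamma_{S,n}}v^2\,d\mathcal{H}^{d-1}\le C\,\epsilon_n\int_\Omega|\nabla v|^2\,dx$ for every admissible $v$, hence via \eqref{eVC} the explicit bound $\lambda_1(\Omega,\Gamma_{S,n})\gtrsim n^{1/(d-1)}$ in every dimension $d$, whereas the paper only produces an explicit rate in the two-dimensional disk computation of Example \ref{ex2}. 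The price is the geometric bookkeeping that the soft argument avoids: the uniformity in $i$ and $n$ of the Poincar\'e--trace constant (which does hold, since rescaling a Lipschitz graph preserves its Lipschitz constant, so each rescaled cell--patch--column configuration is uniformly bi-Lipschitz to a fixed reference, and the constant then depends only on that reference and on the measure fraction $1-m$ of the Dirichlet annulus), and the placement of the columns: exact pairwise disjointness of the $R_{i,n}$ may fail where different boundary charts overlap, but a bounded-overlap estimate (each point of $\Omega$ lies in at most $N(\Omega)$ columns, $N$ independent of $n$) suffices and only changes the constant. Your reduction to $k=1$ via $\lambda_k\ge\lambda_1$ is the same step the paper uses implicitly, and your patches, being bi-Lipschitz images of balls, are admissible competitors with the exact measure constraint after a harmless adjustment of the radii.
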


Then we focus on the planar case. We show an explicit example of a sequence of domains $\Gamma_{S,n}$ for which $\lim_{n\to \infty}\lambda_1(\Omega,\Gamma_{S,n})=\infty$. We see that this phenomenon is linked to the fact that the sequence $\Gamma_{S,n}$ has an unbounded number of connected component. 

Motivated by this example, in the second part of Section \ref{sep}, we study the continuity properties of the Steklov-Dirichlet eigenvalues. More precisely we prove the following theorem, that can be seen as the analogous of the famous result by V. \v{S}ver\'ak  concerning the Dirichlet eigenvalues (see \cite{HP18, S93}) for the Steklov-Dirichlet eigenvalues 
\begin{theorem}\label{tSS}
Let $\Omega\subset \mathbb{R}^2$ be a $\mathcal{C}^{1,1}$ open domain, let $\Gamma_{D,n}\subset \partial \Omega$ be a sequence of compact subdomains converging for the Hausdorff metric to a compact set $\Gamma_D\subset \partial \Omega$. We define the two sets $\Gamma_{S,n}=\partial \Omega\setminus \Gamma_{D,n}$ and $\Gamma_S=\partial \Omega\setminus \Gamma_D$, assume that the number of connected components of $\Gamma_{D,n}$ is uniformly bounded, then for all $k$
\begin{equation*}
\lambda_k(\Omega,\Gamma_{S,n})\rightarrow \lambda_k(\Omega,\Gamma_S).
\end{equation*}
\end{theorem}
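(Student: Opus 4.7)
The plan is to prove the convergence by establishing the Mosco convergence of the constraint spaces $H^1_0(\Omega,\Gamma_{S,n})$ to $H^1_0(\Omega,\Gamma_S)$, combined with the weak-$*$ convergence of the boundary measures $\mathcal{H}^1|_{\Gamma_{S,n}} \to \mathcal{H}^1|_{\Gamma_S}$. Once these two ingredients are available, the min-max characterisation \eqref{eVC} together with standard Rayleigh-quotient arguments delivers $\lambda_k(\Omega,\Gamma_{S,n}) \to \lambda_k(\Omega,\Gamma_S)$ for every $k$, by the same scheme that converts $\gamma$-convergence of domains into spectral convergence in the classical Dirichlet setting.

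The weak-$*$ convergence of the surface measures is the lightest step: since $\partial\Omega$ is a $\mathcal{C}^{1,1}$ curve and each $\Gamma_{D,n}$ is a union of at most $N$ arcs (possibly degenerating to points), Hausdorff convergence in a one-dimensional manifold with uniformly bounded number of components forces $\mathcal{H}^1(\Gamma_{D,n}) \to \mathcal{H}^1(\Gamma_D)$, and from this the weak-$*$ convergence of the surface measures follows by testing against continuous functions on $\partial\Omega$.

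The heart of the proof is the Mosco-$\liminf$ condition: given $u_n \in H^1_0(\Omega,\Gamma_{S,n})$ with $u_n \rightharpoonup u$ in $H^1(\Omega)$, one must show $u \in H^1_0(\Omega,\Gamma_S)$, i.e.\ $\mathrm{Tr}(u) = 0$ $\mathcal{H}^1$-a.e.\ on $\Gamma_D$. By compactness of the trace operator $H^1(\Omega) \to L^2(\partial\Omega)$ the traces $\mathrm{Tr}(u_n)$ converge strongly in $L^2(\partial\Omega)$ and, up to a subsequence, pointwise $\mathcal{H}^1$-a.e. For a density point $x$ of a non-degenerate component of $\Gamma_D$, the uniform bound on the number of components together with Hausdorff convergence lets me extract $x_n \in \Gamma_{D,n}$ lying in a non-shrinking arc converging to the component of $x$; averaging $\mathrm{Tr}(u_n) \equiv 0$ over an $\mathcal{H}^1$-window around $x_n$ and letting $n \to \infty$ yields $\mathrm{Tr}(u)(x) = 0$. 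This is the Steklov-Dirichlet analogue of \v{S}ver\'ak's capacity-density lemma and is the main obstacle of the proof: without the component bound, the sets $\Gamma_{D,n}$ could fragment into pieces whose Hausdorff limit is much larger than what the traces $\mathrm{Tr}(u_n)$ effectively pin down in the limit, exactly as in the explicit counterexample sketched earlier in the paper.

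The Mosco-$\limsup$ (recovery sequence) is produced by the standard cutoff technique: for $u \in H^1_0(\Omega,\Gamma_S)$ I set $u_n = u\,\phi_n$, where $\phi_n \in W^{1,\infty}(\Omega)$ vanishes in a small neighbourhood of $\Gamma_{D,n}$ and equals $1$ outside a slightly larger one, built from a planar logarithmic cutoff of the distance to $\Gamma_{D,n}$. The bound $N$ on the number of connected components keeps the $L^2$-norm of $\nabla \phi_n$ in the transition layer controlled by $N$ times a vanishing factor as the Hausdorff distance $d(\Gamma_{D,n},\Gamma_D) \to 0$, so that $u_n \to u$ strongly in $H^1(\Omega)$. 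Combining this with the Mosco-$\liminf$ property and the convergence of the boundary measures gives the convergence of the Rayleigh quotients at every level $k$, concluding the proof.
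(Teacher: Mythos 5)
Your strategy is genuinely different from the paper's. The paper does not pass through Mosco convergence at all: it applies the quantitative stability estimate of Theorem \ref{tSL} with $\Gamma'_S=\Gamma_{S,n}$, and the whole proof consists in showing that the constants $C_{2,n}$ are uniformly bounded (a lower bound on $\int_{\partial\Omega}v_{k,n}^2\,d\mathcal{H}^1$ via the convergence $v_{k,n}\to u_k$ coming from the estimates of \cite{S97}, a bound on $||v_{k,n}||_{H^1(\Omega)}$, and a uniform bound on $\max\{\lambda_k(\Omega,\Gamma_{S,n}),\lambda_k(\Omega,\Gamma_S)\}$ through the monotonicity Lemma \ref{lME}); the component bound is used only to convert Hausdorff convergence into $\mathcal{H}^1(\Gamma_{D,n}\triangle\Gamma_D)\to 0$. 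Within your scheme, note also that your ``hardest'' step is in fact the easy one: since $\mathcal{H}^1(\Gamma_D\setminus\Gamma_{D,n})\to 0$, strong $L^2(\partial\Omega)$ convergence of the traces together with their equi-integrability gives directly $\int_{\Gamma_D}\mathrm{Tr}(u)^2\,d\mathcal{H}^1=\lim_n\int_{\Gamma_D\setminus\Gamma_{D,n}}\mathrm{Tr}(u_n)^2\,d\mathcal{H}^1=0$, so no Lebesgue-point or window argument is needed; likewise the weak-$*$ convergence of the boundary measures is not really required, because every $u_n\in H^1_0(\Omega,\Gamma_{S,n})$ satisfies $\int_{\Gamma_{S,n}}u_n^2\,d\mathcal{H}^1=\int_{\partial\Omega}u_n^2\,d\mathcal{H}^1$.

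The genuine gap is in your Mosco-$\limsup$ step. A cutoff $\phi_n$ vanishing in a neighbourhood of all of $\Gamma_{D,n}$ and equal to $1$ slightly further away does \emph{not} have small gradient: the components of $\Gamma_{D,n}$ are arcs of positive length, whose logarithmic capacity is bounded away from zero, so $||\nabla\phi_n||_{L^2(\Omega)}$ does not tend to $0$ for any choice of scales, and the claim ``controlled by $N$ times a vanishing factor'' fails. What can be made small is $||u\,\nabla\phi_n||_{L^2(\Omega)}$, but for two different reasons in two different regions: near the part of $\Gamma_{D,n}$ that hugs $\Gamma_D$ one must use that $\mathrm{Tr}(u)=0$ on $\Gamma_D$, through a Hardy-type inequality $\int u^2\,\mathrm{dist}(\cdot,\Gamma_D)^{-2}dx<\infty$; near the at most $2M$ small arcs of $\Gamma_{D,n}\setminus\Gamma_D$ (of length of order $d(\Gamma_{D,n},\Gamma_D)$, clustered at the endpoints of the components of $\Gamma_D$ --- this is where the topological constraint really enters) the trace of $u$ need not vanish, and the logarithmic-capacity argument only works after truncating $u$, since in dimension $2$ one has $H^1(\Omega)\not\subset L^\infty(\Omega)$ and $\int u^2|\nabla\phi_n|^2dx$ cannot be estimated by $||u||_\infty^2||\nabla\phi_n||^2_{L^2}$ for a general $u\in H^1_0(\Omega,\Gamma_S)$. (A cleaner variant is to cut off only around those $\leq 2M$ bad arcs, since $u$ already has zero trace on $\Gamma_{D,n}\cap\Gamma_D$; then the capacity of the excised set does vanish, but the truncation issue remains.) As written, the conclusion ``so that $u_n\to u$ strongly in $H^1(\Omega)$'' is therefore not justified; with Hardy plus a truncation-and-diagonal argument the step can be repaired, and then the rest of your scheme --- spectral convergence from Mosco convergence, compactness of the trace and the min-max formula \eqref{eVC} --- goes through.
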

Using this continuity property we prove the existence of a maximizer in the class of sets with given measure and bounded number of connected components.

\section{Regularity of eigenfunctions}\label{sre}
Let $\Omega\subset \mathbb{R}^d $ be a bounded, open, connected set. Let  $\Gamma_S\subset \partial \Omega$ be a relative open submanifolds and we define also $\Gamma_D=\partial \Omega\setminus \Gamma_S$. In order to investigate the regularity properties of the Steklov-Dirichlet eigenfunctions $u_k$ we are lead to study the following mixed boundary value problem:
\begin{equation}\label{eMBP}
\begin{cases}
     \Delta u=0\quad  &\Omega   \\
      \partial_{\nu} u=g \quad &\Gamma_S \\
      u=0 \quad &\Gamma_D,
\end{cases}
\end{equation}
in particular we want to study the regularity of the solution depending on the regularity of the function $g$.

The big issue for this type of boundary value problem comes from the singularities that can appear when the boundary conditions change. The following example shows that the solution of \eqref{eMBP} is not smooth in general, no matter how the data are regular. 
\begin{example}\label{example}
We define the following set $\mathbb{R}_+^2=\{(x,y)\in \mathbb{R}^2\,|\,y>0 \}$  and we consider the following problem
\begin{equation*}
\begin{cases}
     \Delta u=0\quad  &\mathbb{R}_+^2   \\
      \partial_{\nu} u=0 \quad &\{(x,y)\in \mathbb{R}^2\,|\,x<0,\,y=0 \} \\
      u=0 \quad &\{(x,y)\in \mathbb{R}^2\,|\,x\geq 0,\,y=0 \}.
\end{cases}
\end{equation*}
In polar coordinates a solution for the above problem is given by the following function: 
\begin{equation*}
u(r,\theta)=r^{\frac{1}{2}}\sin(\frac{\theta}{2}) \quad r\geq 0,\, 0\leq \theta \leq 2\pi.
\end{equation*}
We have that $u\in \mathcal{C}^{\frac{1}{2}}(\overline{\mathbb{R}_+^2})$.
\end{example}
In general we cannot expect that the solutions of \eqref{eMBP} are more regular than $\mathcal{C}^{\frac{1}{2}}(\overline{\Omega})$. There is a huge literature concerning the problem of regularity of solutions of mixed boundary value problems, without claiming to be exhaustive  we can refer to the following monographs \cite{D88, G85}, to the articles \cite{AK82,S97,KM07} and the references therein. We will investigate the case when both $\Omega$ and $\Gamma_S$ are smooth, and we will measure the regularity of the solutions in the class of Besov spaces, deeply using the results from \cite{S97}.   

Before presenting the results in our setting we want to comment the fact that, counterintuitively, the case when $\Omega$ is smooth is the case where the least regularity is expected. Indeed when $\Gamma_S$ and $\Gamma_D$ do not meet tangentially then we have a better situation and we have more regularity for the solutions (see\cite{D88}), but the gain of regularity vanishes as the angle between $\Gamma_S$ and $\Gamma_D$ approach $\pi$. 

We introduce the Besov space $B_{2\,\infty}^{\frac{3}{2}}(\Omega)$ via its characterization as an interpolation space between two Sobolev spaces, let $(\cdot,\cdot)_{\theta,p}$ be the real interpolation functor (see \cite{BL76}), then we have that:
\begin{equation*}
B_{2\,\infty}^{\frac{3}{2}}(\Omega)=\big (H^1(\Omega),H^2(\Omega) \big )_{\frac{1}{2},\infty}.
\end{equation*}
Under some smoothness assumption on $\Omega$ and $\Gamma_S$ we can conclude that if $u$ is a solution of \eqref{eMBP} then $u\in B_{2\,\infty}^{\frac{3}{2}}(\Omega)$ (see \cite{S97}), and we will use this results in order to show that $u_k\in B_{2\,\infty}^{\frac{3}{2}}(\Omega)$.

We define what are the precise regularity property of $\Omega$ and $\Gamma_S$ (see \cite{A75,S70,S97})
\begin{definition}\label{dOm}
We say that $\Omega$ is a uniform $\mathcal{C}^{1,1}$ open set of $\mathbb{R}^d$ and $\Gamma_S\subset \partial \Omega$ is a $\mathcal{C}^{1,1}$ relative open submanifold if there exists an $\epsilon>0$, an integer $l$, an $M>0$ and a possible finite sequence $U_1,...,U_k,...$ of open sets of $\mathbb{R}^d$ so that 
\begin{enumerate}
\item if $x\in \partial \Omega$ then $\overline{B_{\epsilon}(x)}\subset U_k$ for some $k$;
\item no point of $\mathbb{R}^d$ is contained in more than $l$ of the $U_k$'s;
\item there exist $\mathcal{C}^{1,1}$ diffeomorphisms
\begin{equation*}
\begin{cases}
      \Phi_k:U_k\rightarrow V_k=B_1(x_k)\subset \mathbb{R}^d,\quad \Psi_k=\Phi_k^{-1}, \\
      ||\Phi_k||_{\mathcal{C}^{1,1}(U_k)}\leq M \quad  ||\Psi_k||_{\mathcal{C}^{1,1}(U_k)}\leq M
\end{cases}
\end{equation*}
\end{enumerate}
with $x_k\in \partial \mathbb{R}_+^d$, and 
\begin{equation*}
\begin{cases}
      \Phi_k(U_k\cap \Omega)=V_k\cap\mathbb{R}_+^d , \\
      \Phi_k(U_k\cap \partial \Omega)=V_k\cap\partial \mathbb{R}_+^d , \\
      \Phi_k(U_k\cap \Gamma_D)=V_k\cap \mathbb{R}_+^{d-1}\times \{0\}, \\
      \Phi_k(U_k\cap \partial \Gamma_D)=V_k\cap \mathbb{R}^{d-2}\times \{(0,0)\}.
\end{cases}
\end{equation*}
\end{definition}
We can now state the result about the regularity of eigenfunctions
\begin{theorem}\label{tRE}
Let $\Omega$ be a uniform $\mathcal{C}^{1,1}$ open set of $\mathbb{R}^d$, let $\Gamma_S\subset \partial \Omega$ be a $\mathcal{C}^{1,1}$ relative open submanifold (as in Definition \ref{dOm}) and let $u_k$ be an eigenfunction corresponding to the eigenvalue $\lambda_k(\Omega,\Gamma_S)$ then $u_k\in B_{2\,\infty}^{\frac{3}{2}}(\Omega)$. Moreover if $n=2$ then $u_k\in \mathcal{C}^{\frac{1}{2}}(\overline{\Omega})$.
\end{theorem}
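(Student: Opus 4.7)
The plan is to realise $u_k$ as the solution of the mixed boundary value problem \eqref{eMBP} with right-hand side $g=\lambda_k(\Omega,\Gamma_S)\,u_k|_{\Gamma_S}$, apply the regularity theorem of Savar\'e \cite{S97} in the setting of Definition \ref{dOm}, and then pass to the H\"older bound in dimension two by a Besov embedding.

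First I would record that, since $u_k\in H^1(\Omega)$ by the variational characterisation \eqref{eVC}, the standard trace theorem gives $u_k|_{\partial\Omega}\in H^{1/2}(\partial\Omega)\hookrightarrow L^2(\partial\Omega)$. In particular the Neumann datum $g=\lambda_k(\Omega,\Gamma_S)\,u_k|_{\Gamma_S}$ belongs to $L^2(\Gamma_S)$, and $u_k$ solves \eqref{eMBP} in the weak sense, with homogeneous Dirichlet conditions on $\Gamma_D$ and $L^2$ Neumann data on $\Gamma_S$. If needed, one could improve this to $H^{1/2}(\Gamma_S)$ using the trace theorem on $\Gamma_S$ itself, but for our purposes $L^2$ is enough.

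Next I would invoke the main result of \cite{S97}: under the hypotheses of Definition \ref{dOm} (uniform $\mathcal{C}^{1,1}$ regularity of $\Omega$ together with the $\mathcal{C}^{1,1}$ structure of the interface $\partial\Gamma_S\subset\partial\Omega$), any weak solution of \eqref{eMBP} with $g\in L^2(\Gamma_S)$ belongs to the interpolation space
\begin{equation*}
B_{2,\infty}^{3/2}(\Omega)=\bigl(H^1(\Omega),H^2(\Omega)\bigr)_{1/2,\infty},
\end{equation*}
with a quantitative estimate of the form $\|u\|_{B^{3/2}_{2,\infty}(\Omega)}\leq C\bigl(\|g\|_{L^2(\Gamma_S)}+\|u\|_{L^2(\Omega)}\bigr)$. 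Applied to $u=u_k$ and $g=\lambda_k u_k|_{\Gamma_S}$, this yields $u_k\in B_{2,\infty}^{3/2}(\Omega)$, which is the first assertion.

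For the two-dimensional statement I would use the Besov embedding $B_{2,\infty}^{3/2}(\Omega)\hookrightarrow B_{\infty,\infty}^{1/2}(\Omega)=\mathcal{C}^{1/2}(\overline{\Omega})$, valid in dimension two because the differential dimension $3/2-2/2=1/2$ is strictly positive and non-integer (so that the Besov space $B^{1/2}_{\infty,\infty}$ coincides with the H\"older space $\mathcal{C}^{1/2}$). I expect the main obstacle not to be in the argument itself, which is essentially a citation, but in verifying that the scale of Besov spaces and the geometric assumptions used in \cite{S97} are compatible with Definition \ref{dOm}: in particular, one has to check that the flattening charts $\Phi_k$ carry the configuration $(\Omega,\Gamma_D,\partial\Gamma_D)$ into the model configuration $(\mathbb{R}^d_+,\mathbb{R}^{d-1}_+\times\{0\},\mathbb{R}^{d-2}\times\{(0,0)\})$ used there, so that the local-to-global pasting of the regularity estimate applies without change.
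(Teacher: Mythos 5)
Your overall route is exactly the paper's: realise $u_k$ as the weak solution of \eqref{eMBP} with Neumann datum $g=\lambda_k(\Omega,\Gamma_S)u_k|_{\Gamma_S}$, invoke the mixed-problem regularity theorem of \cite{S97} in the geometric setting of Definition \ref{dOm}, and conclude the two-dimensional H\"older bound by the embedding $B^{3/2}_{2,\infty}(\Omega)\hookrightarrow \mathcal{C}^{1/2}(\overline{\Omega})$. The embedding step and the chart-compatibility remark are fine.

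The genuine flaw is the claim that ``for our purposes $L^2$ is enough.'' Theorem 1 of \cite{S97} is a maximal-regularity statement: the conclusion $u\in B^{3/2}_{2,\infty}(\Omega)=\bigl(H^1(\Omega),H^2(\Omega)\bigr)_{1/2,\infty}$ requires the data to have the regularity that would yield $H^2$ in the unmixed case, i.e.\ Neumann datum in $H^{1/2}(\Gamma_S)$ (and Dirichlet datum in $H^{3/2}$); the half-derivative loss encoded in $B^{3/2}_{2,\infty}$ is due to the change of boundary condition, not to rough data. With $g$ merely in $L^2(\Gamma_S)$ the theorem you quote does not apply, and even for the pure Neumann problem $L^2$ boundary data only gives $H^{3/2}(\Omega)$; by interpolation one would expect roughly $B^{5/4}_{2,\infty}(\Omega)$ for the mixed problem, which in dimension two embeds only into $\mathcal{C}^{1/4}$, so your final H\"older exponent $1/2$ would not follow. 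Accordingly, the estimate you state, $\|u\|_{B^{3/2}_{2,\infty}(\Omega)}\leq C\bigl(\|g\|_{L^2(\Gamma_S)}+\|u\|_{L^2(\Omega)}\bigr)$, is not the one provided by \cite{S97}; the right-hand side must carry $\|g\|_{H^{1/2}(\Gamma_S)}$. The repair is precisely the step you set aside as optional, and it is how the paper argues: since $u_k\in H^1(\Omega)$, its trace lies in $H^{1/2}(\partial\Omega)$, hence $u_k|_{\Gamma_S}\in H^{1/2}(\Gamma_S)$ and $g=\lambda_k(\Omega,\Gamma_S)u_k|_{\Gamma_S}\in H^{1/2}(\Gamma_S)$; with this datum Theorem 1 of \cite{S97} gives $u_k\in B^{3/2}_{2,\infty}(\Omega)$, and the rest of your argument goes through unchanged.
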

\begin{proof}
We know that $u_k\in H^1(\Omega)$ so in particular $u_k\in H^{\frac{1}{2}}(\Gamma_S)$ and from Theorem 1 in \cite{S97} we conclude that  $u_k\in B_{2\,\infty}^{\frac{3}{2}}(\Omega)$. Moreover if $d=2$ the Sobolev-Besov embedding Theorem gives us that $B_{2\,\infty}^{\frac{3}{2}}(\Omega)\subset \mathcal{C}^{\frac{1}{2}}(\overline{\Omega})$ and in particular $u_k\in \mathcal{C}^{\frac{1}{2}}(\overline{\Omega})$
\end{proof}

\begin{remark}
We notice that in dimension $d=2$ the Steklov-Dirichlet eigenfunctions $u_k$ reach the maximal Holder regularity allowed by Example \ref{example}. 
\end{remark}

\section{Stability of the Steklov-Dirichlet eigenvalues}\label{ssr}
In this section we investigate the stability properties of the eigenvalues $\lambda_k(\Omega,\Gamma_S)$. More precisely we state Theorem \ref{tSL2} in a more precise way, explaining the precise dependence of the constants $C_1$ and $C_2$ from $\Omega$, $\Gamma_S$ and $\Gamma'_S$, and we prove the stability result.

In order to do so we need to introduce the following Lemma about the existence of a family of test functions. 
\begin{lemma}\label{lTF}
Let $u_k$ be an eigenfunction associated to the eigenvalue $\lambda_k(\Omega,\Gamma_S)$ and let $M$ be a constant independent from $\Gamma_S$ and $\Gamma'_S$,  then there exist a family of functions $g_i\in \mathcal{C}^{\infty}(\Gamma'_S \setminus \Gamma_S)$ with $i=1,...,k$, such that $||g_i||_{\mathcal{C}^0(\Omega)}\leq M$ and the solutions of the following mixed boundary value
\begin{equation}\label{eTS}
\begin{cases}
     \Delta \widetilde{v}_i=0\quad  &\Omega   \\
      \partial_{\nu} \widetilde{v}_i=\lambda_k(\Omega,\Gamma_S)u_k \quad &\Gamma'_S\cap \Gamma_S \\
      \partial_{\nu} \widetilde{v}_i=g_i \quad &\Gamma'_S\setminus\Gamma_S \\
      \widetilde{v}_i=0 \quad &\partial \Omega\setminus \Gamma'_S .
\end{cases}
\end{equation}
have the following properties: for all $i\neq j$ we have that
\begin{equation}\label{eCTF}
\int_{\Omega} \nabla \widetilde{v}_i \cdot \nabla \widetilde{v}_j dx=\int_{\partial \Omega} \widetilde{v}_i \widetilde{v}_j d\mathcal{H}^{d-1}=0.
\end{equation}
The same result holds if we exchange the role of $\Gamma_S$ and $\Gamma'_S$
\end{lemma}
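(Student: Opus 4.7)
I interpret the $u_k$ appearing in the Neumann condition of \eqref{eTS} as $u_i$ (the $i$-th Steklov--Dirichlet eigenfunction of $(\Omega,\Gamma_S)$, normalized so that $\int_{\Gamma_S} u_i u_j\,d\mathcal{H}^{d-1} = \delta_{ij}$); with a common Neumann datum on $\Gamma'_S \cap \Gamma_S$ the required mutual orthogonality of $k$ functions would force the differences $g_i - g_j$ to encode the entire structure, which is incompatible with a $\Gamma_S$-independent $C^0$-bound when $\Gamma'_S \cap \Gamma_S$ is the dominant part of $\Gamma'_S$.

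The strategy is to exploit the affine freedom in $g_i$ to enforce the finitely many orthogonality constraints \eqref{eCTF}. First, for each $i = 1,\dots,k$ define a reference solution $\phi_i \in H^1_0(\Omega,\Gamma'_D)$ as the unique weak solution of \eqref{eTS} with $g_i \equiv 0$ (existence by Lax--Milgram, using the regularity $u_i|_{\Gamma_S} \in H^{1/2}$ from Theorem~\ref{tRE}), and write every admissible solution as $\widetilde v_i = \phi_i + w_i$, where $w_i$ is harmonic with vanishing Neumann trace on $\Gamma'_S \cap \Gamma_S$, prescribed Neumann trace $g_i$ on $\Gamma'_S \setminus \Gamma_S$, and vanishing Dirichlet trace on $\partial\Omega \setminus \Gamma'_S$.

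Next, construct the $\widetilde v_i$ inductively on $i$: set $\widetilde v_1 = \phi_1$, and, having fixed $\widetilde v_1,\dots,\widetilde v_{i-1}$, choose $w_i$ so that $\widetilde v_i$ is orthogonal to the previous ones in both bilinear forms appearing in \eqref{eCTF}. Integration by parts, together with $\Delta \widetilde v_i = 0$ and the vanishing Dirichlet traces, turns each of the $2(i-1)$ orthogonality conditions into an explicit linear functional of $g_i$ involving only boundary traces of the already-constructed $\widetilde v_j$ and of the $u_j$. Choosing $g_i$ inside a fixed finite-dimensional space spanned by a collection of smooth bump functions with disjoint supports on $\Gamma'_S \setminus \Gamma_S$ (whose number exceeds $2(k-1)$) reduces the problem to a small square linear system for the coefficients, which is solvable provided the bumps are chosen generically enough for the relevant Gram matrix to be non-singular.

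The main obstacle is to show that the resulting $g_i$ satisfies $\|g_i\|_{C^0}\leq M$ with $M$ independent of $\Gamma_S$ and $\Gamma'_S$. This splits into two sub-problems: (i) a uniform upper bound on the right-hand side of the linear system, which follows from a priori $L^\infty$-bounds on the eigenfunctions $u_i$ obtained by Moser iteration, together with a uniform bound on $\lambda_k(\Omega,\Gamma_S)$ coming from the monotonicity-type comparison $\lambda_k(\Omega,\Gamma_S)\leq \lambda_k(\Omega,\partial\Omega)$ and standard elliptic estimates for $\phi_i$; and (ii) a uniform lower bound on the determinant of the linear system, which demands that the bump functions remain non-degenerate as $\Gamma_S$ and $\Gamma'_S$ vary. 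Step (ii) is the delicate one: it should be handled by selecting the bumps as scaled profiles based at suitable reference points of $\overline{\Gamma'_S \cap \Gamma_S}$, so that their harmonic extensions have boundary traces controlled uniformly in the geometry of $\Gamma'_S\setminus\Gamma_S$.
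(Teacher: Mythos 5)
Your construction mechanism is essentially the one used in the paper: an affine family of Neumann data on $\Gamma'_S\setminus\Gamma_S$ spanned by finitely many smooth bumps, an inductive scheme in $i$, and a square linear system enforcing the $2(i-1)$ orthogonality conditions, solvable for a sufficiently generic choice of bumps. However, you do not prove the lemma as stated: you replace the common Neumann datum $\lambda_k(\Omega,\Gamma_S)u_k$ on $\Gamma'_S\cap\Gamma_S$ by $u_i$. Your reason for doing so (with a common datum and $M$ independent of the sets, the pairwise orthogonality \eqref{eCTF} degenerates when $\mathcal{H}^{d-1}(\Gamma'_S\setminus\Gamma_S)$ is small, since all $\widetilde v_i$ then collapse onto one fixed function of positive norm) is a legitimate criticism of the statement, but the modified lemma cannot be substituted into the paper's argument: in the proof of Theorem \ref{tSL} the whole point is that $v_k=\sum_{i=1}^k\widetilde\alpha_i\widetilde v_i$ with $\sum_{i=1}^k\widetilde\alpha_i=1$ has Neumann datum exactly $\lambda_k(\Omega,\Gamma_S)u_k$ on $\Gamma'_S\cap\Gamma_S$, so that the comparison of $v_k$ with $u_k$ via Theorem 4 of \cite{S97} only involves the symmetric difference $\Gamma_S\triangle\Gamma'_S$. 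With datum $u_i$ the difference of the Neumann data no longer vanishes on $\Gamma_S\cap\Gamma'_S$ and the key estimate for $\|u_k-v_k\|_{H^1(\Omega)}$ breaks down. So you are proving a different statement, and one that does not serve the purpose for which the lemma is used.

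Even for your modified statement the argument is incomplete at the decisive point. The entire content of the lemma is the quantitative claim $\|g_i\|_{\mathcal{C}^0}\leq M$ with $M$ independent of $\Gamma_S$ and $\Gamma'_S$, equivalently a uniform control of the inverse of your linear system; you yourself label this step (ii), call it ``the delicate one'', and only gesture at ``scaled profiles based at suitable reference points'' without an argument, whereas in the paper this is precisely where the functions $\varphi_j$ are chosen so that $\|A_{2i\times 2i}^{-1}\|\leq M$ and $\|b_{2i}\|\leq M$. In addition, your step (i) invokes the comparison $\lambda_k(\Omega,\Gamma_S)\leq\lambda_k(\Omega,\partial\Omega)$, which is reversed: by Lemma \ref{lME} enlarging the Steklov part decreases the eigenvalues, so in fact $\lambda_k(\Omega,\partial\Omega)\leq\lambda_k(\Omega,\Gamma_S)$, and Theorem \ref{tNEU} shows that no upper bound uniform in $\Gamma_S$ exists at all; any bound on the right-hand side of your system must therefore be allowed to depend on $\lambda_k(\Omega,\Gamma_S)$, as the constants in Theorem \ref{tSL} indeed do.
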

\begin{proof}
We define the functions $g_i$ via an induction procedure. In the proof $M$ will be a constant independent from $\Gamma_S$ and $\Gamma'_S$ that can change from line to line. We choose the first function $g_1\in \mathcal{C}^{\infty}(\Gamma'_S \setminus \Gamma_S)$ such that $||g_1||_{C^0(\Gamma'_S \setminus \Gamma_S)}\leq M$ and such that the function
\begin{equation*}
\Phi_1(x)=\begin{cases}
     \lambda_k(\Omega,\Gamma_S)u_k(x) \quad  &x\in\Gamma_S \cap \Gamma'_S   \\
      g_1(x) \quad &x\in\Gamma'_S \setminus \Gamma_S
\end{cases}
\end{equation*}
is in $H^{\frac{1}{2}}(\Gamma'_S)$, so we introduce the first test function $\widetilde{v}_1$
\begin{equation*}
\begin{cases}
     \Delta \widetilde{v}_1=0\quad  &\Omega   \\
      \partial_{\nu} \widetilde{v}_1=\lambda_k(\Omega,\Gamma_S)u_k \quad &\Gamma'_S\cap \Gamma_S \\
      \partial_{\nu} \widetilde{v}_1=g_1 \quad &\Gamma'_S\setminus\Gamma_S \\
      \widetilde{v}_1=0 \quad &\partial \Omega\setminus \Gamma'_S .
\end{cases}
\end{equation*}
We now show how to construct the function $g_2$ with the desired properties, and then we generalize this procedure, constructing the function $g_{i+1}$ knowing the function $g_1,g_2,...,g_i$. We introduce two functions $\varphi_1$ and $\varphi_2$ in $\mathcal{C}_c^{\infty}(\Gamma'_S \setminus \Gamma_S)$ such that $||\varphi_1||_{C^0(\Gamma'_S \setminus \Gamma_S)}\leq M$ and $||\varphi_2||_{C^0(\Gamma'_S \setminus \Gamma_S)}\leq M$, two real parameters $t_1$ and $t_2$, we will choose more precisely the functions and the parameters later in the proof. We define also $v_{\varphi_i}$ with $i=1,2$ to be the solution of the following mixed boundary value problem
\begin{equation*}
\begin{cases}
     \Delta v_{\varphi_i}=0\quad  &\Omega   \\
      \partial_{\nu} v_{\varphi_i}=\varphi_i \quad &\Gamma'_S \\
      v_{\varphi_i}=0 \quad &\partial \Omega\setminus \Gamma'_S .
\end{cases}
\end{equation*}  
We denote by $\widetilde{v}_2$ the solution of the following problem:
\begin{equation}\label{eWTF}
\begin{cases}
     \Delta \widetilde{v}_2=0\quad  &\Omega   \\
      \partial_{\nu} \widetilde{v}_2=\lambda_k(\Omega,\Gamma_S)u_k \quad &\Gamma'_S\cap \Gamma_S \\
      \partial_{\nu} \widetilde{v}_2=g_1+t_1\varphi_1+t_2\varphi_2 \quad &\Gamma'_S\setminus\Gamma_S \\
      \widetilde{v}_2=0 \quad &\partial \Omega\setminus \Gamma'_S .
\end{cases}
\end{equation}
Now we want to choose the functions $\varphi_1$ and $\varphi_2$ and the parameters $t_1$ and $t_2$ in such a way that the conditions \eqref{eCTF} are satisfied. More precisely, from the linearity of the equation \eqref{eWTF} and from the linearity of the Dirichlet-to-Neumann map, we obtain the following equalities:
\begin{align*}
\int_{\Omega} \nabla \widetilde{v}_1 \cdot \nabla \widetilde{v}_2 dx&=\int_{\Gamma'_S\cap \Gamma_S} \lambda_k(\Omega,\Gamma_S)^2u_k^2 d\mathcal{H}^{d-1}+\int_{\Gamma'_S\setminus \Gamma_S}g_1^2 d\mathcal{H}^{d-1}+\\
&+t_1\int_{\Gamma'_S\setminus \Gamma_S}g_1\varphi_1 d\mathcal{H}^{d-1}+t_2\int_{\Gamma'_S\setminus \Gamma_S}g_1\varphi_2 d\mathcal{H}^{d-1}, \\
\int_{\partial \Omega}\widetilde{v}_1\widetilde{v}_2dx&=\int_{\Gamma'_S} \widetilde{v}_1^2d\mathcal{H}^{d-1}+t_1\int_{\Gamma'_S} \widetilde{v}_1v_{\varphi_1}d\mathcal{H}^{d-1}+t_2\int_{\Gamma'_S} \widetilde{v}_1v_{\varphi_2}d\mathcal{H}^{d-1}.
\end{align*} 
Now we introduce the following matrix and the following vector 
\begin{equation*}
A_{2\times 2}=\begin{pmatrix}
\int_{\Gamma'_S\setminus \Gamma_S}g_1\varphi_1 d\mathcal{H}^{d-1} & \int_{\Gamma'_S\setminus \Gamma_S}g_1\varphi_2 d\mathcal{H}^{d-1} \\
\int_{\Gamma'_S} \widetilde{v}_1v_{\varphi_1}d\mathcal{H}^{d-1} & \int_{\Gamma'_S} \widetilde{v}_1v_{\varphi_2}d\mathcal{H}^{d-1}, 
\end{pmatrix}
\end{equation*}
\begin{equation*}
b_{2}=\begin{pmatrix}
- \int_{\Gamma'_S\cap \Gamma_S} \lambda_k(\Omega,\Gamma_S)^2u_k^2 d\mathcal{H}^{d-1}-\int_{\Gamma'_S\setminus \Gamma_S}g_1^2 d\mathcal{H}^{d-1} \\
-\int_{\Gamma'_S} \widetilde{v}_1^2d\mathcal{H}^{d-1}.
\end{pmatrix}
\end{equation*}
We choose $\varphi_1$ and $\varphi_2$ in such a way that $A_{2\times 2}^{-1}$ exists, $||A_{2\times 2}^{-1}||\leq M$ and $||b_2||\leq M$ (we recall that the constant $M$ can change). Now we choose the parameters $t=(t_1,t_2)$ to be the solutions of the following linear system $A_{2\times 2}t=b_2$. Now we choose $g_2$ to be the following function:
\begin{equation*}
g_2=g_1+t_1\varphi_1+t_2\varphi_2.
\end{equation*}
With the choice we made for $\varphi_{1}$, $\varphi_{2}$ and $t=(t_1,t_2)$ it is clear that $||g_2||_{\mathcal{C}^0(\Omega)}\leq M$ and the function $\widetilde{v}_2$ satisfies the conditions \eqref{eCTF}.

Now suppose we know the functions $g_1,...,g_i$, we can construct the function $g_{i+1}$ using the same algorithm. This time we need to introduce the functions $\{\varphi_j\}_{j=1}^{2i}$ such that $\varphi_j\in \mathcal{C}_c^{\infty}(\Gamma'_S \setminus \Gamma_S)$ and $||\varphi_j||_{C^0(\Gamma'_S \setminus \Gamma_S)}\leq M$ for all $j=1,...,2i$, and the parameters $\{t_j\}_{j=1}^{2i}$. We introduce the function 
\begin{equation*}
\begin{cases}
     \Delta \widetilde{v}_{i+1}=0\quad  &\Omega   \\
      \partial_{\nu} \widetilde{v}_{i+1}=\lambda_k(\Omega,\Gamma_S)u_k \quad &\Gamma'_S\cap \Gamma_S \\
      \partial_{\nu} \widetilde{v}_{i+1}=g_1+\sum_{j=1}^{2i}t_j\varphi_j. \quad &\Gamma'_S\setminus\Gamma_S \\
      \widetilde{v}_{i+1}=0 \quad &\partial \Omega\setminus \Gamma'_S .
\end{cases}
\end{equation*}
Now, by imposing $2i$ orthogonality conditions with the function $\{\widetilde{v}_{j}\}_{j=1}^i$ we will obtain a similar system of linear equations, in particular we will define in a similar way the matrix $A_{2i\times 2i}$ and the vector $b_{2i}$. We choose the functions $\{\varphi_j\}_{j=1}^{2i}$ in such a way that $A_{2i\times 2i}^{-1}$ exists, $||A_{2i\times 2i}^{-1}||\leq M$ and $||b_{2i}||\leq M$ (we recall that the constant $M$ can change). Now we choose the vector $t=(t_1,t_2,...,t_{2i})$ to be the solution of the following system $A_{2i\times 2i}t=b_{2i}$ and we define $g_{i+1}$ in the following way
\begin{equation*}
g_{i+1}=g_1+\sum_{j=1}^{2i}t_j\varphi_j.
\end{equation*}
With the choice we made for $\{\varphi_j\}_{j=1}^{2i}$ and $t=(t_1,t_2,...,t_{2i})$ it is clear that $||g_{i+1}||_{\mathcal{C}^0(\Omega)}\leq M$ and the function $\widetilde{v}_{i+1}$ satisfies the conditions \eqref{eCTF}. The same construction is possible when we exchange the two sets $\Gamma_S$ and $\Gamma'_S$.
\end{proof}

We are now ready to state in a more precise way Theorem \ref{tSL2} and to prove it. Let $K_1\subset \mathbb{R}^d$ and $K_2\subset \mathbb{R}^d$ be two compact sets, we denote by $d(K_1,K_2)$ the Hausdorff distance between the two sets. 
 
\begin{theorem}\label{tSL}
Let $\Omega$ be a uniform $\mathcal{C}^{1,1}$ open set of $\mathbb{R}^d$, let $\Gamma_S\subset \partial \Omega$ and $\Gamma'_S\subset \partial \Omega$ two $\mathcal{C}^{1,1}$ relative open submanifolds (see Definition \ref{dSET}) such that $\mathcal{H}^{d-1}(\Gamma_S \cap \Gamma'_S)>0$ . We define the two sets $\Gamma_D=\partial \Omega\setminus \Gamma_S$ and $\Gamma'_D=\partial \Omega\setminus \Gamma'_S$ and let $v_k$ be a linear combination of the functions $\{\widetilde{v}_i\}_{i=1}^k$ defined in Lemma \ref{lTF} then 
\begin{itemize}
\item For $d\geq 3$ there exists a constant $C_1$, that depends on $\Omega$, on $||v_k||_{L^2(\partial \Omega)}$, on $||v_k||_{H^1(\Omega)}$ and on $\max \{\lambda_k(\Omega,\Gamma_S),\lambda_k(\Omega,\Gamma'_S) \}$,  such that:
\begin{equation}
|\lambda_k(\Omega,\Gamma_S)-\lambda_k(\Omega,\Gamma'_S)|\leq C_1\big (\mathcal{H}^{d-1}(\Gamma_S \triangle \Gamma'_S)^{\frac{1}{2d}}+d(\Gamma_D,\Gamma'_D)^{\frac{1}{2}}\big ).
\end{equation} 
\item For $d=2$ there exists a constant $C_1$, that depends on $\Omega$, on $||v_k||_{L^2(\partial \Omega)}$, on $||v_k||_{H^1(\Omega)}$ and on $\max \{\lambda_k(\Omega,\Gamma_S),\lambda_k(\Omega,\Gamma'_S) \}$,  such that:
\begin{equation}
|\lambda_k(\Omega,\Gamma_S)-\lambda_k(\Omega,\Gamma'_S)|\leq C_2\big (\mathcal{H}^1(\Gamma_S \triangle \Gamma'_S)^{\frac{1}{2}}+d(\Gamma_D,\Gamma'_D)^{\frac{1}{2}}\big ).
\end{equation} 
\end{itemize}
\end{theorem}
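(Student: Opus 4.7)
The plan is to upper-bound $\lambda_k(\Omega,\Gamma'_S)$ in terms of $\lambda_k(\Omega,\Gamma_S)$ and the two geometric quantities in the statement; the opposite inequality then follows by exchanging the roles of $\Gamma_S$ and $\Gamma'_S$, exactly as allowed by the last sentence of Lemma \ref{lTF}. The natural source of trial functions is Lemma \ref{lTF}: the $\widetilde{v}_i$ lie in $H_0^1(\Omega,\Gamma'_S)$ because they vanish on $\partial\Omega\setminus\Gamma'_S$; by the orthogonality relations (\ref{eCTF}) they span a $k$-dimensional subspace $V_k$, and for any linear combination $v_k=\sum_{i}\alpha_i\widetilde{v}_i$ the Rayleigh quotient reduces to
\[
\frac{\int_\Omega|\nabla v_k|^2\,dx}{\int_{\Gamma'_S}v_k^2\,d\mathcal{H}^{d-1}}=\frac{\sum_i\alpha_i^2\int_\Omega|\nabla\widetilde{v}_i|^2}{\sum_i\alpha_i^2\int_{\Gamma'_S}\widetilde{v}_i^2}\leq \max_{1\leq i\leq k}\frac{\int_\Omega|\nabla\widetilde{v}_i|^2}{\int_{\Gamma'_S}\widetilde{v}_i^2}.
\]
By the variational principle (\ref{eVC}) it suffices to control each of these quotients.

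For the numerator I would apply Green's identity together with the mixed boundary conditions of $\widetilde{v}_i$, obtaining
\[
\int_\Omega|\nabla\widetilde{v}_i|^2\,dx=\lambda_k(\Omega,\Gamma_S)\int_{\Gamma_S\cap\Gamma'_S}u_k\widetilde{v}_i\,d\mathcal{H}^{d-1}+\int_{\Gamma'_S\setminus\Gamma_S}g_i\widetilde{v}_i\,d\mathcal{H}^{d-1}.
\]
Applying the same identity to $u_k$ lets me rewrite the first integral on the right as $\lambda_k(\Omega,\Gamma_S)\int_{\Gamma_S}u_k^2$ minus a correction over $\Gamma_S\setminus\Gamma'_S$. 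The remainder $\int_{\Gamma'_S\setminus\Gamma_S}g_i\widetilde{v}_i$ is handled by Cauchy--Schwarz, using the uniform $C^0$-bound $\|g_i\|_{C^0}\leq M$ furnished by Lemma \ref{lTF} and a trace estimate on $\widetilde{v}_i$, producing a factor of $\mathcal{H}^{d-1}(\Gamma'_S\setminus\Gamma_S)^{1/2}$.

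For the denominator I would bound $\int_{\Gamma'_S}\widetilde{v}_i^2$ from below by $\int_{\Gamma_S\cap\Gamma'_S}\widetilde{v}_i^2$ and then compare this with $\int_{\Gamma_S}u_k^2$ (after suitable normalization). The missing piece, $\int_{\Gamma_S\setminus\Gamma'_S}u_k^2$, is where the Hausdorff-distance term enters: since $\Gamma_S\setminus\Gamma'_S\subset\Gamma'_D$ lies inside a tubular neighborhood of $\Gamma_D$ of width $d(\Gamma_D,\Gamma'_D)$, and $u_k$ vanishes on $\Gamma_D$ with the Besov regularity given by Theorem \ref{tRE}, I can estimate $|u_k(x)|^2\lesssim d(x,\Gamma_D)$ on this neighborhood (directly from the $C^{1/2}(\overline{\Omega})$ embedding when $d=2$, and via trace/interpolation from $B^{3/2}_{2,\infty}(\Omega)$ when $d\geq 3$). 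Taking square roots at the end of the comparison yields the $d(\Gamma_D,\Gamma'_D)^{1/2}$ contribution. The integrals living on $\Gamma_S\triangle\Gamma'_S$ are controlled by Hölder's inequality together with the trace embedding of $B^{3/2}_{2,\infty}(\Omega)$ into a Lebesgue space $L^p(\partial\Omega)$ of suitable exponent: the explicit value of $p$ produces $\mathcal{H}^1(\Gamma_S\triangle\Gamma'_S)^{1/2}$ in dimension two (from the $C^{1/2}$-embedding, hence $L^\infty$) and $\mathcal{H}^{d-1}(\Gamma_S\triangle\Gamma'_S)^{1/(2d)}$ in dimension $d\geq 3$.

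The main obstacle will be to keep the constants uniform in $\Gamma_S,\Gamma'_S$: the construction in Lemma \ref{lTF} goes through a Gram-type linear system whose matrix $A$ and vector $b$ depend on $\Gamma_S$, $\Gamma'_S$ and the auxiliary functions $\varphi_j$, and the $\widetilde{v}_i$ then carry this dependence. This is precisely why the theorem permits $C_1$ and $C_2$ to depend on $\|v_k\|_{L^2(\partial\Omega)}$, $\|v_k\|_{H^1(\Omega)}$ and $\max\{\lambda_k(\Omega,\Gamma_S),\lambda_k(\Omega,\Gamma'_S)\}$: these are exactly the quantities into which the implicit bounds on $\|A^{-1}\|$, $\|b\|$ and $\|g_i\|_{C^0}$ get absorbed at the end of the estimate.
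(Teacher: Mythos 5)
Your proposal starts the same way as the paper (test functions from Lemma \ref{lTF} plus the variational characterization \eqref{eVC}), and you correctly identify where the measure terms come from (H\"older with the $L^{\frac{2d}{d-1}}(\partial\Omega)$ trace embedding for $d\geq 3$, the $\mathcal{C}^{1/2}$ regularity of Theorem \ref{tRE} for $d=2$). But the central quantitative step is missing, and the step you put in its place is wrong. Green's identity gives
$\int_\Omega|\nabla\widetilde v_i|^2=\lambda_k(\Omega,\Gamma_S)\int_{\Gamma_S\cap\Gamma'_S}u_k\widetilde v_i+\int_{\Gamma'_S\setminus\Gamma_S}g_i\widetilde v_i$,
and no application of Green's identity to $u_k$ turns the first term into $\lambda_k(\Omega,\Gamma_S)\int_{\Gamma_S}u_k^2$ minus a correction on $\Gamma_S\setminus\Gamma'_S$: the cross identity $\int_\Omega\nabla u_k\cdot\nabla\widetilde v_i=\lambda_k\int_{\Gamma_S}u_k\widetilde v_i$ still carries the product $u_k\widetilde v_i$, while integrating by parts the other way brings in $\int_{\Gamma_S\setminus\Gamma'_S}(\partial_\nu\widetilde v_i)\,u_k$, where $\partial_\nu\widetilde v_i$ is not prescribed data (on that set $\widetilde v_i$ satisfies a Dirichlet condition) and is generically singular at the interface, as Example \ref{example} shows. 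The same problem affects your treatment of the denominator: bounding $\int_{\Gamma'_S}\widetilde v_i^2$ below by something comparable to $\int_{\Gamma_S}u_k^2$ requires knowing that $\widetilde v_i$ (or the combination $v_k$) is close to $u_k$ on the boundary, and nothing in your argument produces that closeness. In the paper this is exactly the role of the stability estimate for mixed boundary value problems (Theorem 4 in \cite{S97}), which yields $||u_k-v_k||_{H^1(\Omega)}\leq C\big(||u_k||_{L^2(\Gamma_S\setminus\Gamma'_S)}+||\sum_i\widetilde\alpha_i g_i||_{L^2(\Gamma'_S\setminus\Gamma_S)}+d(\Gamma_D,\Gamma'_D)^{1/2}\big)$; all subsequent comparisons of numerators and denominators in \eqref{eSSD1} are run through this single $H^1$ bound, and it is the only place the Hausdorff distance enters.

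Your substitute mechanism for the Hausdorff term also does not work in general: you propose $|u_k(x)|^2\lesssim d(x,\Gamma_D)$ on a tubular neighborhood of $\Gamma_D$, which is fine for $d=2$ where $u_k\in\mathcal{C}^{1/2}(\overline\Omega)$, but for $d\geq 3$ the regularity available is only $u_k\in B_{2\,\infty}^{3/2}(\Omega)$, which does not embed into $\mathcal{C}^{1/2}(\overline\Omega)$ (nor even into $\mathcal{C}^0$) in those dimensions, so no such pointwise decay near $\Gamma_D$ is available; the vague appeal to ``trace/interpolation'' does not fill this. Moreover, even granting that estimate, it would only control $\int_{\Gamma_S\setminus\Gamma'_S}u_k^2$, i.e.\ a term already of measure type, and would not resolve the missing comparison between $v_k$ and $u_k$ that the distance term is really accounting for. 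To repair the proof you need to invoke (or reprove) a quantitative continuity result for solutions of the mixed problem with respect to both the Neumann data and the Hausdorff perturbation of the Dirichlet region, as the paper does, and then follow the bookkeeping of \eqref{eSSD1}--\eqref{eUBHN2}.
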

We will prove the upper bound for the quantity $\lambda_k(\Omega,\Gamma'_S)-\lambda_k(\Omega,\Gamma_S)$, from the proof of this upper bound we will easily obtain the desired estimates also for the quantity $\lambda_k(\Omega,\Gamma_S)-\lambda_k(\Omega,\Gamma'_S)$ by simply exchanging the role of $\Gamma_S$ and $\Gamma'_S$.

\begin{proof} 
In the proof we will denote by $C$ a constant, which can change from line to line, that depends on $\Omega$, on $||v_k||_{L^2(\partial \Omega)}$, on $||v_k||_{H^1(\Omega)}$  and on $\max \{\lambda_k(\Omega,\Gamma_S),\lambda_k(\Omega,\Gamma'_S) \}$. We start by estimating the following quantity:
\begin{equation}
\lambda_k(\Omega,\Gamma'_S)-\lambda_k(\Omega,\Gamma_S).
\end{equation}
We use the variational characterization \eqref{eVC} and the test functions constructed in Lemma \ref{lTF} and we obtain that 
\begin{equation*}
\lambda_k(\Omega,\Gamma'_S)\leq \max_{\alpha \in \mathbb{R}^k}  \frac{\int_{\Omega}|\nabla \big (\sum_{i=1}^k \alpha_i\widetilde{v}_i\big )|^2 dx}{\int_{\Gamma'_S}\big (\sum_{i=1}^k \alpha_i\widetilde{v}_i\big )^2d\mathcal{H}^{d-1}}=\max_{\alpha \in \mathbb{R}^k} \frac{\sum_{i=1}^k \alpha_i^2 \int_{\Omega}|\nabla \widetilde{v}_i|^2 dx}{\sum_{i=1}^k \alpha_i^2 \int_{\Gamma'_S}\widetilde{v}_i^2d\mathcal{H}^{d-1}},
\end{equation*}
where the last equality is true because of the conditions \eqref{eCTF}. Let $\widetilde{\alpha}\in \mathbb{R}^k$ be the solution of the maximization problem, from the last equality it is clear that we can assume $\widetilde{\alpha}_i\geq 0$ for all $i=1,...,k$, in particular $\sum_{i=1}^k \widetilde{\alpha}_i>0$, because otherwise $\widetilde{\alpha}_i=0$ for all $i=1,...,k$ and this is not possible. The Rayleigh quotient is invariant under multiplication by a scalar, so we can also assume that $\sum_{i=1}^k \widetilde{\alpha}_i=1$ and we define the following function:  
\begin{equation*}
v_k=\sum_{i=1}^k \widetilde{\alpha}_i\widetilde{v}_i.
\end{equation*}
From the linearity of the equations \eqref{eTS} we know that $v_k$ satisfies the following mixed boundary value problem
\begin{equation}\label{eTF}
\begin{cases}
     \Delta v_k=0\quad  &\Omega   \\
      \partial_{\nu} v_k=\lambda_k(\Omega,\Gamma_S)u_k \quad &\Gamma'_S\cap \Gamma_S \\
      \partial_{\nu}v_k=\sum_{i=1}^k \widetilde{\alpha}_ig_i \quad &\Gamma'_S\setminus\Gamma_S \\
      v_k=0 \quad &\partial \Omega\setminus \Gamma'_S .
\end{cases}
\end{equation}

From the variational characterization, using the function $v_k$ as a test function for $\lambda_k(\Omega,\Gamma'_S)$ and let $u_k$ be an eigenfunction corresponding to $\lambda_k(\Omega,\Gamma_S)$, we obtain the following inequality: 
\begin{equation}\label{eSSD1}
\lambda_k(\Omega,\Gamma'_S)-\lambda_k(\Omega,\Gamma_S)\leq \frac{\int_{\Gamma_S}u_k^2d\mathcal{H}^{d-1}\int_{\Omega}|\nabla v_k|^2 dx-\int_{\Gamma'_S}v_k^2d\mathcal{H}^{d-1}\int_{\Omega}|\nabla u_k|^2 dx}{\int_{\Gamma_S}u_k^2d\mathcal{H}^{d-1}\int_{\Gamma'_S}v_k^2d\mathcal{H}^{d-1}}.
\end{equation}
We start by giving an upper bound for the quantity $\int_{\Gamma_S}u_k^2$. From Cauchy-Schwarz inequality we know that
\begin{equation*}
\int_{\Gamma_S}u_k^2d\mathcal{H}^{d-1}-\int_{\Gamma'_S}v_k^2d\mathcal{H}^{d-1} \leq ||u_k+v_k||_{L^2(\partial \Omega)}||u_k-v_k||_{L^2(\partial \Omega)},
\end{equation*} 
and we also know that 
\begin{equation*}
||v_k||_{L^2(\partial \Omega)} \leq ||u_k-v_k||_{L^2(\partial \Omega)}+||u_k||_{L^2(\partial \Omega)},
\end{equation*}
so we conclude that 
\begin{equation}\label{eDifUUK}
\int_{\Gamma_S}u_k^2d\mathcal{H}^{d-1}-\int_{\Gamma'_S}v_k^2d\mathcal{H}^{d-1} \leq \big ( ||u_k-v_k||_{L^2(\partial \Omega)}+2||u_k||_{L^2(\partial \Omega)} \big )||u_k-v_k||_{L^2(\partial \Omega)}.
\end{equation}
From the regularity assumption on the domain $\Omega$ we know that the trace operator is compact and we denote its norm by $C_t$. Using this fact, combined with \eqref{eDifUUK} we obtain the following upper bound for $\int_{\Gamma_S}u_k^2$:
\begin{equation*}
\int_{\Gamma_S}u_k^2d\mathcal{H}^{d-1} \leq \big ( C_t||u_k-v_k||_{H^1(\Omega)}+2||u_k||_{L^2(\partial \Omega)} \big )C_t||u_k-v_k||_{H^1(\Omega)}+\int_{\Gamma'_S}v_k^2d\mathcal{H}^{d-1},
\end{equation*}
using this estimate in \eqref{eSSD1} we finally obtain 
\begin{align}\label{eSSD2}
\lambda_k(\Omega,\Gamma'_S)- \lambda_k(\Omega,\Gamma_S)&\leq 
 \frac{1}{\int_{\Gamma_S}u_k^2d\mathcal{H}^{d-1}\int_{\Gamma'_S}v_k^2d\mathcal{H}^{d-1}}\Big [\int_{\Gamma'_S}v_k^2d\mathcal{H}^{d-1}\big (\int_{\Omega}|\nabla v_k|^2 dx-\int_{\Omega}|\nabla u_k|^2 dx\big ) +\\ \notag
 &+\big ( C_t^2||u_k-v_k||_{H^1(\Omega)}^2+2||u_k||_{L^2(\partial \Omega)}C_t||u_k-v_k||_{H^1(\Omega)} \big )\int_{\Omega}|\nabla v_k|^2 dx \Big ].
\end{align}
We want to bound the right hand side of \eqref{eSSD2}. We start by noticing that the function $v_k$ is in $H^1(\Omega)$ and also that there exists a constant $C>0$ such that 
\begin{equation}\label{eLBU}
C\leq\int_{\Gamma'_S} v_k^2d\mathcal{H}^{d-1},
\end{equation}
indeed if we have that $\int_{\Gamma'_S} v_k^2d\mathcal{H}^{d-1}=0$, we will have that the trace of the function $v_k$ is a function constantly equal to zero, and this is a contradiction with the equation \eqref{eTF} that $v_k$ must satisfy.

We now estimate the quantity $\int_{\Omega}|\nabla v_k|^2 dx-\int_{\Omega}|\nabla u_k|^2 dx$ in \eqref{eSSD2}, we obtain that 
\begin{align}\label{eUBIfGrad}
&\int_{\Omega}|\nabla v_k|^2 dx-\int_{\Omega}|\nabla u_k|^2 dx\leq \big | ||v_k||^2_{H^1(\Omega)}-||u_k||^2_{H^1(\Omega)}\big |+\big | ||v_k||^2_{L^2(\Omega)}-||u_k||^2_{L^2(\Omega)} \big |\\\notag
&\leq \big | ||v_k||_{H^1(\Omega)}-||u_k||_{H^1(\Omega)}\big |\big | ||v_k||_{H^1(\Omega)}+||u_k||_{H^1(\Omega)}\big |+\big | ||v_k||_{L^2(\Omega)}-||u_k||_{L^2(\Omega)} \big |\big | ||v_k||_{L^2(\Omega)}+||u_k||_{L^2(\Omega)} \big |\\\notag
&\leq C ||u_k-v_k||_{H^1(\Omega)}.
\end{align}
 The last remaining term to estimate is the quantity $||u_k-v_k||_{H^1(\Omega)}$, in order to estimate this term we use Theorem 4 in \cite{S97} and \eqref{eTF} to conclude that there exists a constant $C$ that depends only on $\Omega$ such that the following inequality holds
\begin{align*}
||u_k-&v_k||_{H^1(\Omega)}\leq C \Big (\sup_{||w||_{H^{\frac{1}{2}}(\partial \Omega)}=1}\big | \lambda_k(\Omega,\Gamma_S)\int_{\Gamma_S}(\partial_{\nu}u_k)wd\mathcal{H}^{d-1}-\int_{\Gamma'_S}(\partial_{\nu} v_k) wd\mathcal{H}^{d-1} \big | +d(\Gamma_D,\Gamma'_D)^{\frac{1}{2}} \Big)\\
&\leq C \Big ( \sup_{||w||_{H^{\frac{1}{2}}(\partial \Omega)}=1}\int_{\Gamma_S\setminus \Gamma'_S}|u_kw|d\mathcal{H}^{d-1} +\int_{\Gamma'_S\setminus \Gamma_S}|w \big ( \sum_{i=1}^k \widetilde{\alpha}_ig_i\big )|d\mathcal{H}^{d-1}+d(\Gamma_D,\Gamma'_D)^{\frac{1}{2}} \Big)\\
&\leq C \big( ||u_k||_{L^2(\Gamma_S\setminus \Gamma'_S)}+||\sum_{i=1}^k\widetilde{\alpha}_ig_i||_{L^2(\Gamma'_S\setminus \Gamma_S)}+d(\Gamma_D,\Gamma'_D)^{\frac{1}{2}}\big).
\end{align*}
Let us now consider separately the case $d\geq 3$ and $d=2$. For $d\geq 3$ it is enough to know that $u_k\in H^\frac{1}{2}(\partial \Omega)$, indeed by classical embedding theorem for Sobolev spaces we have that $u_k\in L^{\frac{2d}{d-1}}(\partial \Omega)$ by Lemma \ref{lTF} we also know that $\sum_{i=1}^k\widetilde{\alpha}_ig_i\in L^{\frac{2d}{d-1}}(\partial \Omega)$. Using H\"older inequality we conclude that 
\begin{equation}\label{eUBHNp}
||u_k-v_k||_{H^1(\Omega)}\leq C\big( ||u_k||_{L^{\frac{2d}{d-1}}(\partial  \Omega)}\mathcal{H}^{d-1}(\Gamma_S\setminus \Gamma'_S)^{\frac{1}{2d}}+||\sum_{i=1}^k\widetilde{\alpha}_ig_i||_{L^{\frac{2d}{d-1}}(\partial  \Omega)}\mathcal{H}^{d-1}(\Gamma'_S\setminus \Gamma_S)^{\frac{1}{2d}}+d(\Gamma_D,\Gamma'_D)^{\frac{1}{2}}\big)
\end{equation}
From Lemma \ref{lTF} we know that there exists a constant $C$, that does not depend on $\Gamma_S$ and $\Gamma'_S$, such that $||\sum_{i=1}^k\widetilde{\alpha}_ig_i||^{\frac{1}{2}}_{L^{\frac{2d}{d-1}}(\partial  \Omega)}\leq C$. From the continuity of the Sobolev embedding and the trace operator we have that there exists a constant $C$, that depends only on $\Omega$, such that $||u_k||_{L^{\frac{2d}{d-1}}(\partial  \Omega)}\leq C ||u_k||_{H^1(\Omega)}$. Now using this inequality together with the Poincar\'e -Friedrichs inequality, assuming that $||u_k||_{L^2(\Omega)}=1$, we obtain
\begin{equation*}
||u_k||_{L^{\frac{2d}{d-1}}(\partial  \Omega)}\leq C \sqrt{\lambda_k(\Omega,\Gamma_S)\big (\frac{1}{\lambda_{1,1}(\Omega)}+1\big)+\frac{1}{\lambda_{1,1}(\Omega)}},
\end{equation*}  
where $\lambda_{1,1}(\Omega)$ is the first Robin eigenvalue with parameter $1$. From the inequality above and from \eqref{eUBHNp} we conclude that there exists a constant $C$ that depends only on $\Omega$ and $\lambda_k(\Omega,\Gamma_S)$ such that
\begin{equation}\label{eUBHN}
||u_k-v_k||_{H^1(\Omega)}\leq  C\big (\mathcal{H}^{d-1}(\Gamma_S \triangle \Gamma'_S)^{\frac{1}{2d}}+d(\Gamma_D,\Gamma'_D)^{\frac{1}{2}}\big ).
\end{equation}
For $d=2$, using the regularity results given in Theorem \ref{tRE}, we know that $u_k\in\mathcal{C}^{\frac{1}{2}}(\overline{\Omega})$, by Lemma \ref{lTF} we also know that $\sum_{i=1}^k\widetilde{\alpha}_ig_i \in\mathcal{C}^{\frac{1}{2}}(\partial \Omega)$, so we obtain that

\begin{equation}\label{eUBHN2p}
||u_k-v_k||_{H^1(\Omega)}\leq C\big( ||u_k||_{\mathcal{C}^0(\overline{\Omega})}^{\frac{1}{2}}\mathcal{H}^1(\Gamma_S \setminus \Gamma'_S)^{\frac{1}{2}}+||\sum_{i=1}^k\widetilde{\alpha}_ig_i||_{\mathcal{C}^0(\partial \Omega)}^{\frac{1}{2}}\mathcal{H}^1(\Gamma'_S \setminus \Gamma_S)^{\frac{1}{2}}+d(\Gamma_D,\Gamma'_D)^{\frac{1}{2}}\big)
\end{equation}
From Lemma \ref{lTF} we know that there exists a constant $C$, that does not depend on $\Gamma_S$ and $\Gamma'_S$, such that $||\sum_{i=1}^k\widetilde{\alpha}_ig_i||_{\mathcal{C}^0(\partial \Omega)}\leq C$. From the continuity of the Sobolev-Besov embedding (see \cite{A75,BL76,S97}) we know that there exists a constant $C$, that depends only on $\Omega$, such that $||u_k||_{\mathcal{C}^{\frac{1}{2}}(\overline{\Omega})}\leq C ||u_k||_{ B_{2\,\infty}^{\frac{3}{2}}(\Omega)}$. From this inequality and from Theorem $1$ in \cite{S97} we conclude that there exists a constant $C$, that depends only on $\Omega$, such that
\begin{equation*}
||u_k||_{\mathcal{C}^{\frac{1}{2}}(\overline{\Omega})}\leq C \sqrt{||u_k||_{H^1(\Omega)}(1+||u_k||_{H^1(\Omega)})}.
\end{equation*}  
Now using a similar argument as the one we used in the case $d\geq 3$ we can bound from above the quantity $||u_k||_{H^1(\Omega)}$ and, from \eqref{eUBHN2p}, we conclude that there exists a constant $C$, that depends only on $\Omega$ and $\lambda_k(\Omega,\Gamma_S)$, such that
\begin{equation}\label{eUBHN2}
||u_k-v_k||_{H^1(\Omega)}\leq C\big (\mathcal{H}^1(\Gamma_S \triangle \Gamma'_S)^{\frac{1}{2}}+d(\Gamma_D,\Gamma'_D)^{\frac{1}{2}}\big ).
\end{equation}

For $d\geq 3$ using \eqref{eLBU}, \eqref{eUBIfGrad} and \eqref{eUBHN} in \eqref{eSSD2} we conclude that there exists a constant $C_1$, that depends on $\Omega$, on $||v_k||_{L^2(\partial \Omega)}$, on $||v_k||_{H^1(\Omega)}$ and on $\lambda_k(\Omega,\Gamma_S)$,  such that:
\begin{equation*}
\lambda_k(\Omega,\Gamma'_S)- \lambda_k(\Omega,\Gamma_S)\leq C_1\big (\mathcal{H}^{d-1}(\Gamma_S \triangle \Gamma'_S)^{\frac{1}{2d}}+d(\Gamma_D,\Gamma'_D)^{\frac{1}{2}}\big ).
\end{equation*}
For $d=2$ using using \eqref{eLBU}, \eqref{eUBIfGrad} and \eqref{eUBHN2} in \eqref{eSSD2}  we conclude that there exists a constant $C_2$, that depends on $\Omega$, on $||v_k||_{L^2(\partial \Omega)}$, on $||v_k||_{H^1(\Omega)}$ and on $\lambda_k(\Omega,\Gamma_S)$,  such that:
\begin{equation*}
\lambda_k(\Omega,\Gamma'_S)- \lambda_k(\Omega,\Gamma_S)\leq C_2\big (\mathcal{H}^1(\Gamma_S \triangle \Gamma'_S)^{\frac{1}{2}}+d(\Gamma_D,\Gamma'_D)^{\frac{1}{2}}\big ).
\end{equation*}

Now exchanging the role of $\Gamma_S$ and $\Gamma'_S$ we obtain the same kind of estimates for the quantity $\lambda_k(\Omega,\Gamma_S)- \lambda_k(\Omega,\Gamma'_S)$, this concludes the proof
\end{proof}
\section{Optimization problems and continuity properties}\label{sep}
In the first part of this section we study maximization and minimization problems for the Steklov-Dirichlet eigenvalues with a measure constraint on the set $\Gamma_S$. In the second part we study the continuity properties of the Steklov-Dirichlet eigenvalues.
\subsection{Optimization problems}
We start by proving the existence of a minimizer, in particular we prove Theorem \ref{tEM}. We now state an elementary lemma about the monotonicity property of the Steklov-Dirichlet eigenvalues. 
\begin{lemma}\label{lME}
Let $\Omega\subset \mathbb{R}^d$ be a Lipschitz domain and let $\Gamma_S\subset \partial \Omega$ and $\Gamma'_S\subset \partial \Omega$ two relative open subsets such that $\Gamma_S\subset \Gamma'_S$ then, for all $k$, the following inequality holds
\begin{equation*}
\lambda_k(\Omega,\Gamma'_S)\leq \lambda_k(\Omega,\Gamma_S)
\end{equation*}
\end{lemma}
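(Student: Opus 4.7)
The plan is to apply the variational characterization \eqref{eVC} directly, exploiting two simple monotonicities: the admissible test-function space grows when $\Gamma_S$ grows, and simultaneously the denominator of the Rayleigh quotient grows.

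First I would unpack the hypothesis. Since $\Gamma_S\subset \Gamma'_S$, taking complements in $\partial\Omega$ yields $\Gamma'_D\subset \Gamma_D$. Hence any function that vanishes (in the trace sense) on $\Gamma_D$ automatically vanishes on $\Gamma'_D$, giving the inclusion of admissible spaces
\[
H^1_0(\Omega,\Gamma_S)\subset H^1_0(\Omega,\Gamma'_S).
\]
In particular, every $k$-dimensional subspace $V_k\subset H^1_0(\Omega,\Gamma_S)$ is also an admissible competitor in the min-max formula for $\lambda_k(\Omega,\Gamma'_S)$.

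Second, for any such $V_k$ and any $v\in V_k$, the numerator $\int_\Omega|\nabla v|^2\,dx$ of the Rayleigh quotient is the same for both eigenvalue problems, while the pointwise inequality $v^2\geq 0$ together with $\Gamma_S\subset \Gamma'_S$ gives
\[
\int_{\Gamma_S} v^2\,d\mathcal{H}^{d-1}\;\leq\;\int_{\Gamma'_S} v^2\,d\mathcal{H}^{d-1}.
\]
Therefore the $\Gamma'_S$-Rayleigh quotient of $v$ is no larger than the $\Gamma_S$-Rayleigh quotient of $v$. Taking the supremum over $v\in V_k$ preserves this inequality, and then taking the infimum over all $V_k\subset H^1_0(\Omega,\Gamma_S)$ gives
\[
\lambda_k(\Omega,\Gamma'_S)\;\leq\;\inf_{V_k\subset H^1_0(\Omega,\Gamma_S)}\,\sup_{v\in V_k}\frac{\int_\Omega|\nabla v|^2\,dx}{\int_{\Gamma'_S} v^2\,d\mathcal{H}^{d-1}}\;\leq\;\lambda_k(\Omega,\Gamma_S),
\]
where in the first inequality one uses that the infimum for $\lambda_k(\Omega,\Gamma'_S)$ is taken over a larger collection of subspaces (namely all $V_k\subset H^1_0(\Omega,\Gamma'_S)$), and in the second one the pointwise comparison of denominators.

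There is no real obstacle here: the statement is a textbook consequence of the min-max principle. The only minor point to mention is well-posedness of the Rayleigh quotient, i.e.\ that $\int_{\Gamma_S}v^2\,d\mathcal{H}^{d-1}>0$ whenever $v\in V_k\setminus\{0\}$ is an admissible competitor (otherwise the trace of $v$ on $\partial\Omega$ vanishes entirely, contradicting that $v$ is a nonzero harmonic function with Dirichlet trace zero on $\Gamma_D$, hence arbitrary interior values); the same observation guarantees $\int_{\Gamma'_S}v^2\,d\mathcal{H}^{d-1}\geq \int_{\Gamma_S}v^2\,d\mathcal{H}^{d-1}>0$, so the comparison of quotients is meaningful.
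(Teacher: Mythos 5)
Your argument is correct and follows essentially the same route as the paper: the inclusion $H^1_0(\Omega,\Gamma_S)\subset H^1_0(\Omega,\Gamma'_S)$ coming from $\Gamma'_D\subset\Gamma_D$, combined with the monotonicity of the boundary denominator in the min-max formula (the paper simply specializes to the test subspace spanned by the first $k$ eigenfunctions of the $\Gamma_S$-problem). Only your closing well-posedness aside is slightly off: competitors in \eqref{eVC} need not be harmonic, and a nonzero $v\in H^1_0(\Omega)$ is an admissible element of $H^1_0(\Omega,\Gamma_S)$ whose boundary integral vanishes --- but such functions merely produce Rayleigh quotient $+\infty$, which only enlarges the suprema and so does not affect the inequality chain.
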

\begin{proof}
Let $u_i$ be an eigenfunction associated to $\lambda_i(\Omega,\Gamma_S)$ with $i=1,...,k$. From the assumption $\Gamma_S\subset \Gamma'_S$, it is clear that  $V=span[u_1,u_2,...,u_k]$ is a subspace of the Hilbert space $H_0^1(\Omega,\Gamma'_S)$, in particular we can use the test subspace $V$ in the variational characterization of $\lambda_k(\Omega, \Gamma'_S)$ and this concludes the proof. 
\end{proof}
This simple lemma is crucial in order to prove the existence of a minimizer. Indeed, using this property, we can construct a minimizer by using a classical procedure based on the concept of weak $\gamma$-convergence (see \cite{BB05} for more details).  
\begin{proof}[Proof of Theorem \ref{tEM}]
We consider a minimizing sequence $\Gamma_{S,\epsilon}$, and we consider the normalized eigenfunctions $||u_{k,\epsilon}||_{L^2(\partial \Omega)}=1$. We want to obtain a uniform bound in $H^1(\Omega)$ for the eigenfunctions $u_{k,\epsilon}$. From the minimality assumption for the sequence $\Gamma_{S,\epsilon}$ we can assume that:
\begin{equation*}
\int_{\Omega}|\nabla u_{k,\epsilon}|^2 dx=\lambda_k(\Omega,\Gamma_{S,\epsilon})\leq C<\infty\quad \forall\, \epsilon>0.
\end{equation*}
Now the bound for $\int_{\Omega}u_{k,\epsilon}^2dx$ follows directly from Poincare-Friedrichs inequality 
\begin{equation*}
\int_{\Omega}u_{k,\epsilon}^2dx\leq \frac{1}{\lambda_{1,1}(\Omega)}\big [\int_{\Omega}|\nabla u_{k,\epsilon}|^2 dx+\int_{\partial \Omega} u_{k,\epsilon}^2d\mathcal{H}^{d-1}\big ],
\end{equation*}
where $\lambda_{1,1}(\Omega)$ is the first Robin eigenvalue with parameter $1$.

We conclude that there exists a function $u_k\in H^1(\Omega)$ such that, up to a subsequence, the following convergences hold:
\begin{align}\label{cCE}
u_{k,\epsilon} &\rightharpoonup u_k \quad \text{in} \quad H^1(\Omega),\\ \notag
u_{k,\epsilon} &\rightarrow u_k \quad \text{in} \quad L^2(\partial \Omega),\\ \notag
u_{k,\epsilon} &\rightarrow u_k \quad \text{a. e. in} \quad \partial \Omega.
\end{align}
Let $j\neq k$, we can use the same argument as above for $u_{j,\epsilon}$ and we obtain that there exists a function $u_j\in H^1(\Omega)$ such that the same convergences above held. Now from the orthogonality of the eigenfunctions in $L^2(\partial \Omega)$ we have that
\begin{equation}\label{eOE}
0=\int_{\partial \Omega}u_{j,\epsilon}u_{k,\epsilon}\rightarrow \int_{\partial \Omega}u_ju_k=0
\end{equation}
Now we want to prove the orthogonality of the gradient of the eigenfunctions. We can use the concept of compensated compactness (see \cite{M78}), indeed we have that div$(\nabla u_{j,\epsilon})=\Delta u_{j,\epsilon}=0$ and rot$(\nabla u_{k,\epsilon})=0$, in particular, from the convergences \eqref{cCE}, we can conclude that 
\begin{equation}\label{eOGE}
 0=\int_{\Omega}\nabla u_{k,\epsilon}\cdot \nabla u_{j,\epsilon} dx\rightarrow  \int_{\Omega}\nabla u_k\cdot \nabla u_j dx=0.
\end{equation}

From the convergence of the eigenfunctions $u_{i,\epsilon}$, with $i=1,...,k$ we can conclude that, up to a subsequence 
\begin{equation}\label{cCSE}
\sum_{i=1}^ku_{i,\epsilon}^2\rightarrow \sum_{i=1}^ku_i^2 \quad \text{a. e. in} \quad \partial \Omega.
\end{equation}
We define the following set 
\begin{equation}\label{dSET}
\Gamma=\{x\in \partial \Omega \,|\,\sum_{i=1}^ku_i(x)^2>0\}.
\end{equation}
We can use the test subspace $V=span[u_1,u_2,...,u_k] $ in the variational characterization \eqref{eVC} for the eigenvalue $\lambda_k(\Omega,\Gamma)$. Recalling the orthogonality conditions \eqref{eOE}, \eqref{eOGE}, the normalization $||u_{i,\epsilon}||_{L^2(\partial \Omega)}=1$ and the convergences \eqref{cCE} we obtain
\begin{align*}
\lambda_k(\Omega,\Gamma)&\leq \max_{\alpha \in \mathbb{R}^k \atop \sum_{i=1}^k \alpha_i^2=1}  \frac{\int_{\Omega}|\nabla \big (\sum_{i=1}^k \alpha_iu_i\big )|^2 dx}{\int_{\Gamma}\big (\sum_{i=1}^k \alpha_iu_i\big )^2d\mathcal{H}^{d-1}}\\
&\leq \sum_{i=1}^k \overline{\alpha}_i^2\int_{\Omega}|\nabla u_i|^2 dx\\
&\leq\liminf_{\epsilon\rightarrow 0} \sum_{i=1}^k \overline{\alpha}_i^2\int_{\Omega}|\nabla u_{i,\epsilon}|^2 dx\\
&\leq\liminf_{\epsilon\rightarrow 0} \sum_{i=1}^k \overline{\alpha}_i^2\lambda_i(\Omega,\Gamma_{S,\epsilon})\\
&\leq \liminf_{\epsilon\rightarrow 0} \lambda_k(\Omega,\Gamma_{S,\epsilon}).
\end{align*}
From the definition \eqref{dSET}, the convergence \eqref{cCSE} and from Fatou Lemma we have:
\begin{equation*}
\mathcal{H}^{d-1}(\Gamma)\leq \liminf_{\epsilon\rightarrow 0} \mathcal{H}^{d-1}(\{x\in \partial \Omega \,|\,\sum_{i=1}^ku_{i,\epsilon}(x)^2>0\})=m\mathcal{H}^{d-1}(\partial \Omega).
\end{equation*}
Now if $\mathcal{H}^{d-1}(\Gamma)=m\mathcal{H}^{d-1}(\partial \Omega)$ the proof is finished. If instead  $\mathcal{H}^{d-1}(\Gamma)<m\mathcal{H}^{d-1}(\partial \Omega)$, we define a new set $\Gamma_1$ such that $\mathcal{H}^{d-1}(\Gamma_1)=m\mathcal{H}^{d-1}(\partial \Omega)$ and $\Gamma\subset \Gamma_1$, from Lemma \ref{lME} we know that $\lambda_k(\Omega,\Gamma_1)\leq \lambda_k(\Omega,\Gamma)$. This concludes the proof.
\end{proof}
\begin{remark}
We actually proved the existence of a minimizer in a relaxed framework. Indeed the minimizing set $\Gamma\subset \partial \Omega$ that we defined is not a relative open set, but we can still define the Steklov-Dirichlet eigenvalue $\lambda_k(\Omega,\Gamma)$. Indeed by construction we have that $\partial \Omega\setminus \Gamma=\cup_{i=1}^k\{x\in \partial \Omega \,|\,u_i(x)=0\}$, with $u_i\in H^1(\Omega)$ for all $i=1,...,k$, and in particular the space $H_0^1(\Omega,\Gamma)$ is a well defined Hilbert space. We can define the eigenvalues $\lambda_k(\Omega,\Gamma)$ via the variational characterization \eqref{eVC}. 
\end{remark}
We now study the maximization problem, in particular we prove Theorem \ref{tEM}. In order to prove that a maximizer does not exists it is enough to construct a sequence of domains $\Gamma_{S,n}\subset \partial \Omega$ such that $\mathcal{H}^{d-1}(\Gamma_{S,n})=m\mathcal{H}^{d-1}(\partial \Omega)$ for all $n$ and $\lambda_1(\Omega,\Gamma_{S,n})\rightarrow +\infty$. The following geometric Lemma is crucial for the construction of the maximizing sequence. This Lemma is classical (see \cite{HP18}), for this reason we will give only a sketch of the proof  
\begin{lemma}\label{lGL}
Let $\Omega$ be a Lipschitz domain and let $0<m<1$ be a constant, then there exists a sequence of domains $\Gamma_n\subset \partial \Omega$ such that 
\begin{align*}
\mathcal{H}^{d-1}(\Gamma_n)&=m\mathcal{H}^{d-1}(\partial \Omega)\quad \forall\, n \quad,\\ 
\chi_{\Gamma_n} & \overset{\ast}{\rightharpoonup}m\chi_{\partial \Omega} \quad \text{in} \quad L^{\infty}(\partial \Omega).\\ 
\end{align*}
\end{lemma}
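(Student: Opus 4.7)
The plan is to build $\Gamma_n$ by a partition-and-selection construction, then verify weak-$\ast$ convergence by testing against continuous functions and using uniform continuity.

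First I would fix $n\in \mathbb{N}$ and produce a finite measurable partition $\{P_{n,i}\}_{i=1}^{N(n)}$ of $\partial\Omega$ into pieces of diameter at most $1/n$ and positive $\mathcal{H}^{d-1}$ measure. Since $\Omega$ is Lipschitz, this is standard: cover $\partial\Omega$ by finitely many charts flattening the boundary, partition each chart image into dyadic cubes of side roughly $1/n$, intersect with $\partial\Omega$, and disjointify. On each piece $P_{n,i}$, use the fact that the nonatomic measure $\mathcal{H}^{d-1}\lfloor_{P_{n,i}}$ is diffuse to select a measurable subset $A_{n,i}\subset P_{n,i}$ with $\mathcal{H}^{d-1}(A_{n,i})=m\,\mathcal{H}^{d-1}(P_{n,i})$ (Sierpi\'nski's theorem). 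Define $\Gamma_n=\bigcup_i A_{n,i}$; by construction $\mathcal{H}^{d-1}(\Gamma_n)=m\,\mathcal{H}^{d-1}(\partial\Omega)$.

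Next I would verify that $\chi_{\Gamma_n}\overset{\ast}{\rightharpoonup} m\,\chi_{\partial\Omega}$ in $L^\infty(\partial\Omega)$. Since $\|\chi_{\Gamma_n}\|_{L^\infty}\le 1$ is uniformly bounded and $\mathcal{C}(\partial\Omega)$ is dense in $L^1(\partial\Omega)$, it suffices to test against $f\in\mathcal{C}(\partial\Omega)$. For such $f$, uniform continuity yields $\omega_f(1/n)\to 0$, and on each piece $P_{n,i}$ we can replace $f$ by its average $\bar f_{n,i}$ with error bounded by $\omega_f(1/n)\,\mathcal{H}^{d-1}(P_{n,i})$. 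Then
\begin{equation*}
\int_{\partial\Omega} f\,\chi_{\Gamma_n}\,d\mathcal{H}^{d-1}=\sum_i \bar f_{n,i}\,\mathcal{H}^{d-1}(A_{n,i})+O(\omega_f(1/n))=m\sum_i \bar f_{n,i}\,\mathcal{H}^{d-1}(P_{n,i})+O(\omega_f(1/n)),
\end{equation*}
and the right hand side converges to $m\int_{\partial\Omega}f\,d\mathcal{H}^{d-1}$, as desired.

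The only mildly delicate point is producing the partition on a Lipschitz boundary with pieces of controlled diameter and positive measure (so that the inner selection step is meaningful); a finite atlas of bi-Lipschitz charts reduces this to the Euclidean dyadic case, and the bi-Lipschitz distortion only introduces bounded multiplicative constants that do not affect the diameter bound $1/n$ up to a harmless renormalization. Everything else is routine, which is why the author states only a sketch is needed.
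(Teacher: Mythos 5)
Your argument is correct in substance and rests on the same underlying idea as the paper's sketch: spread a set of total measure $m\,\mathcal{H}^{d-1}(\partial\Omega)$ over the boundary at a finer and finer scale so that its characteristic function equidistributes. The implementations differ slightly. The paper takes $\Gamma_n$ to be a disjoint union of many small geodesic balls of equal measure distributed over $(\partial\Omega,g)$, first treating $m=1/c$ with $c\in\mathbb{N}$ and then adjusting the radii for general $m$, and it verifies weak-$\ast$ convergence by testing against step functions and concluding by density. You instead partition $\partial\Omega$ into cells of diameter at most $1/n$, select inside each cell a subset carrying exactly the fraction $m$ of its measure, and test against continuous functions via the modulus of continuity; this handles arbitrary $m\in(0,1)$ at once and makes the limit computation exact on each cell, which is arguably cleaner than the equidistribution check. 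One caveat: your selection step via Sierpi\'nski's theorem only produces \emph{measurable} sets $A_{n,i}$, whereas the lemma speaks of domains $\Gamma_n\subset\partial\Omega$ and, in its later use (the maximizing sequence in Theorem \ref{tNEU}), these sets play the role of the Steklov region $\Gamma_S$, which in the paper's framework is a relatively open subset of $\partial\Omega$; the paper's choice of geodesic balls gives this for free. The fix is immediate: inside each cell $P_{n,i}$ replace the Sierpi\'nski set by a small geodesic ball (or a finite union of such balls) of measure $m\,\mathcal{H}^{d-1}(P_{n,i})$, chosen by an intermediate-value argument in the radius, and your averaging argument goes through verbatim. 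With that modification your proof is a complete and slightly more streamlined version of the paper's.
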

\begin{proof}[Sketch of the Proof.] 
In order to give a more clear idea of the construction of the set $\Gamma_n$, we assume that there exists $c\in \mathbb{N}$ such that $m=\frac{1}{c}$.

Without loss of generality we can assume that $\mathcal{H}^{d-1}(\partial \Omega)=1$. Let $(\partial \Omega,g)$ be the manifold given by the boundary of $\Omega$ endowed with the metric $g$ induced by the euclidean metric on $\mathbb{R}^d$. We denote by $B_g(x,r)\subset \partial \Omega$ the ball with respect to the metric $g$, with center $x$ and radius $r$. We fix $n\in \mathbb{N}$ and we consider a set of points $\{x_k\}_{k=1}^{c^{n-1}}$ and a set of radii $\{r_{k,n}\}_{k=1}^{c^{n-1}}$ such that $x_j\notin B_g(x_k,r_{k,n})$ if $x_k\neq x_j$ and $\mathcal{H}^{d-1}(B_g(x_k,r_{k,n}))=\frac{1}{c^{n-1}}$ for all $k=1,2,...,c^{n-1}$. We define the following set
\begin{equation*}
\Gamma_n=\bigcup_{k=1}^{c^{n-1}}B_g(x_k,r_{k,n}),
\end{equation*}
now we have that $\mathcal{H}^{d-1}(\Gamma_n)=m$ for all $n$. Let $s\in L^1(\partial \Omega)$ be a step function, it is straightforward to check that $\int_{\partial
\Omega}\chi_{\Gamma_n}s\rightarrow m\int_{\partial
\Omega}s$ and we conclude by density (see \cite{HP18}). In the more general case where $c\notin \mathbb{N}$ we must define the radii $\{r_{k,n}\}_{k=1}^{c^{n-1}}$ in such a way that: 
\begin{equation*}
\sum_{k=1}^{\lfloor c^{n-1} \rfloor}\mathcal{H}^{d-1}(B_g(x_k,r_{k,n}))=c.
\end{equation*}
\end{proof}
We are now ready to prove Theorem \ref{tNEU}.
\begin{proof}[Proof of Theorem \ref{tNEU}]
Let $\Gamma_n$ be the sequence of subdomains defined in Lemma \ref{lGL} and we define $\Gamma_{S,n}=\Gamma_n$. Let $u_{1,n}$ be a first Steklov-Dirichlet eigenfunction associated to $\lambda_1(\Omega,\Gamma_{S,n})$ and we assume that $||u_{1,n}||_{L^2(\partial \Omega)}=1$.

Suppose by contradiction that there exists a constant $C$ such that:
\begin{equation*}
\sup_{n\in \mathbb{N}} \lambda_1(\Omega,\Gamma_{S,n})\leq C.
\end{equation*}
Now we know that $\lambda_1(\Omega,\Gamma_{S,n})=||\nabla u_{1,n}||_{L^2(\Omega)}^2\leq C$ and, from from Poincar\'e-Friedrichs inequality, we also know the following bound 
\begin{equation*}
\int_{\Omega}u_{1,n}^2dx\leq \frac{1}{\lambda_{1,1}(\Omega)}\big [\int_{\Omega}|\nabla u_{1,n}|^2 dx+\int_{\partial \Omega} u_{1,n}^2d\mathcal{H}^{d-1}\big ],
\end{equation*}
where $\lambda_{1,1}(\Omega)$ is the first Robin eigenvalue with parameter $1$. We conclude that the functions $u_{1,n}$ are bounded in $H^1(\Omega)$ and, from the fact that $\Omega$ is a Lipschitz domain, we can conclude also that there exists $u\in L^2(\partial \Omega)$ such that, up to a subsequence
\begin{equation*}
u_{1,n} \rightarrow u \quad \text{in} \quad L^2(\partial \Omega).
\end{equation*}
Now we reach a contradiction, indeed we know that $||u_{1,n}||_{L^2(\partial \Omega)}=1$ for all $n$, this implies
\begin{equation*}
1=\int_{\partial \Omega} u_{1,n}^2d\mathcal{H}^{d-1}\rightarrow \int_{\partial \Omega} u^2d\mathcal{H}^{d-1}=1,
\end{equation*}
but from Lemma \ref{lGL} we have that 
\begin{equation*}
1=\int_{\partial \Omega} u_{1,n}^2d\mathcal{H}^{d-1}=\int_{\partial \Omega}\chi_{\Gamma_{S,n}} u_{1,n}^2d\mathcal{H}^{d-1}\rightarrow m\int_{\partial \Omega} u^2d\mathcal{H}^{d-1}=m<1,
\end{equation*}
this is a contradiction.
\end{proof}
We want now to construct an explicit example in the plane where we can understand more deeply the divergence of the sequence $\lambda_1(\Omega,\Gamma_{S,n})$. This example actually show that the divergence of the sequence $\lambda_1(\Omega,\Gamma_{S,n})$ is linked to the unboundedness of the number of connected components of the sequence $\Gamma_{S,n}$. 

\begin{example}\label{ex2}
We consider the unit disk $\mathbb{D}$ in the plane and let $n\in \mathbb{N}$ be an even number. We define $\Gamma_{S,n}$ in the following way
\begin{equation*}
\Gamma_{S,n}=\bigcup_{k=0}^{\frac{n}{2}-1} \big \{e^{i\phi}\;|\;(2k)\frac{2\pi}{n}<\phi<(2k+1)\frac{2\pi}{n} \big \},
\end{equation*}

and, for all $k=0,...,n-1$, we define 

\begin{equation*}
\mathbb{D}_{k,n}=\big \{re^{i\phi}\;|\;0< r< 1,\;k\frac{2\pi}{n}+\frac{\pi}{n}< \phi< (k+1)\frac{2\pi}{n}+\frac{\pi}{n}\big \}.
\end{equation*}

\begin{figure}
\centering
\hspace*{5em}{\begin{tikzpicture}[scale=0.8] 
   
    \filldraw [color=black!60, fill=gray!5, thick] (0,0) -- (3.863703,1.035276) arc (15:45:4cm)
-- cycle;
\filldraw [color=black!60, fill=gray!5, thick] (0,0) -- (2.82842,2.82842) arc (45:75:4cm)
-- cycle;
\filldraw [color=black!60, fill=gray!5, thick] (0,0) -- (1.035276,3.863703) arc (75:105:4cm)
-- cycle;
    \draw[black] (4,0) arc (0:360:4);
    \draw[red, very thick] (4,0) arc (0:30:4cm);
    \draw[blue,very thick] (3.464101,2) arc (30:60:4cm);
    \draw[red,thick] (2,3.464101) arc (60:90:4cm);
    \draw[blue,very thick] (0,4) arc (90:120:4cm);
    \node (Y) at (4.4,1) {\textcolor{red}{$\Gamma_{S,n}$}};
    \node (X) at (3.5,3) {\textcolor{blue}{$\Gamma_{D,n}$}};
    \node (Z) at (1.5,4.15) {\textcolor{red}{$\Gamma_{S,n}$}};
    \node (W) at (-1,4.2) {\textcolor{blue}{$\Gamma_{D,n}$}};
    \draw [black,dashed,->,domain=120:360] plot ({cos(\x)},{sin(\x)});
    \node (Y1) at (2.5,1.3) {$\mathbb{D}_{i,n}$};
    \node (X1) at (1.5,2.5) {$\mathbb{D}_{i+1,n}$};
    \node (Z1) at (0,2.8) {$\mathbb{D}_{i+2,n}$};

  \end{tikzpicture}}
\caption{Example of the domain constructed in Example \ref{ex2}}
\label{fig1}
\end{figure}
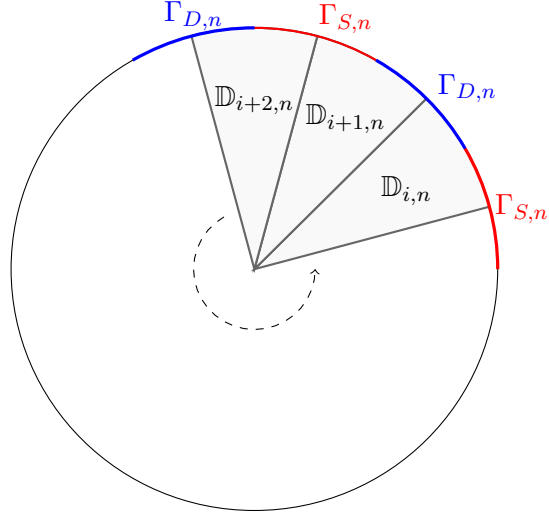

Let $u_{1,n}$ be the eigenfunction associated to $\lambda_1(\mathbb{D},\Gamma_{S,n})$, it is easy to check that there exists an index $\overline{j}$ such that the following holds
\begin{equation}\label{eLD}
\lambda_1(\mathbb{D},\Gamma_{S,n})=\frac{\int_{\mathbb{D}}|\nabla u_{1,n}|^2 dx}{\int_{\partial \mathbb{D}}u_{1,n}^2 ds}=\frac{\sum_{k=0}^{n-1}\int_{\mathbb{D}_{k,n}}|\nabla u_{1,n}|^2 dx}{\sum_{k=0}^{n-1}\int_{\partial \mathbb{D}\cap \partial \mathbb{D}_{k,n} }u_{1,n}^2 ds}\geq \frac{\int_{\mathbb{D}_{\overline{j},n}}|\nabla u_{1,n}|^2 dx}{\int_{\partial \mathbb{D}\cap \partial \mathbb{D}_{\overline{j},n} }u_{1,n}^2 ds}.
\end{equation}
Now by induction we define a new function $\overline{u}_n$, in $\mathbb{D}$, we define $\overline{u}_n$ to be equal to $u_{1,n}$ on $\mathbb{D}_{\overline{j},n}$ , on $\mathbb{D}_{\overline{j}+1,n}$ we define $\overline{u}_n$ in the following way
\begin{equation*}
\overline{u}_n(r,\phi)=\overline{u}_n\big (r,(\overline{j}+1)\frac{4\pi}{n}+\frac{2\pi}{n}-\phi \big )\quad \forall (r,\phi)\in \mathbb{D}_{\overline{j}+1,n},
\end{equation*}
and, for all $k$, we will define $\overline{u}_n$ on $\mathbb{D}_{k+1,n}$ knowing the function on $\mathbb{D}_{k,n}$ in the following way
\begin{equation*}
\overline{u}_n(r,\phi)=\overline{u}_n\big (r,(k+1)\frac{4\pi}{n}+\frac{2\pi}{n}-\phi \big )\quad \forall (r,\phi)\in \mathbb{D}_{k+1,n}.
\end{equation*}

From the definition we know that $\overline{u}_n\in H^1(\Omega)$, for all $0<r_0\leq 1$ the function $\overline{u}_n(r_0,\phi)$ is periodic in $[0,2\pi]$ with period $\frac{2\pi}{n}$ and also, from \eqref{eLD}, we know that
\begin{equation}\label{eLD2}
\lambda_1(\mathbb{D},\Gamma_{S,n})\geq \frac{\int_{\mathbb{D}_{\overline{j},n}}|\nabla u_{1,n}|^2 dx}{\int_{\partial \mathbb{D}\cap \partial \mathbb{D}_{\overline{j},n} }u_{1,n}^2 ds}=\frac{\int_{\mathbb{D}_{\overline{j},n}}|\nabla \overline{u}_n|^2 dx}{\int_{\partial \mathbb{D}\cap \partial \mathbb{D}_{\overline{j},n} }\overline{u}_n^2 ds}=\frac{\int_{\mathbb{D}}|\nabla \overline{u}_n|^2 dx}{\int_{\partial \mathbb{D}}\overline{u}_n^2 ds}.
\end{equation}
The inequality above prove that the function $\overline{u}_n$ is an eigenfunction associated to $\lambda_1(\mathbb{D},\Gamma_{S,n})$, in particular is an harmonic function on $\mathbb{D}$. 

Finally we know that $\overline{u}_n$ is harmonic in $\mathbb{D}$ and $\overline{u}_n|_{\partial \mathbb{D}}$ is periodic in $[0,2\pi]$ with period $\frac{2\pi}{n}$, so let $c_l$ be the Fourier coefficients for the function $\overline{u}_n|_{\partial \mathbb{D}}$, the following equalities holds
\begin{align*}
\int_{\partial \mathbb{D}}\overline{u}_n^2 ds&=\sum_{l=1}^{\infty}c_l^2,\\
\int_{\mathbb{D}}|\nabla \overline{u}_n|^2 dx&=\pi n \sum_{l=1}^{\infty}lc_l^2.
\end{align*}  
From the inequalities above and \eqref{eLD2} we finally conclude that:
\begin{equation*}
\lambda_1(\mathbb{D},\Gamma_{S,n})=\frac{\int_{\mathbb{D}}|\nabla \overline{u}_n|^2 dx}{\int_{\partial \mathbb{D}}\overline{u}_n^2 ds}\geq \pi n
\end{equation*}
so we finally have that $\lim_{n\rightarrow \infty}\lambda_1(\mathbb{D},\Gamma_{S,n})=+\infty$.

Now let $\Omega$ be a simply connected domain and let $f:\Omega\rightarrow \mathbb{D}$ be a conformal map. Suppose that $\Omega$ is regular enough so that $\min_{z\in \partial \Omega}|f'(z)|>0$, for instance $\Omega$ is a inner domain of a Dini-Jordan curve (see \cite{P92}). We define $\hat{\Gamma}_{S,n}:=f^{-1}(\Gamma_{S,n})$, let $v_{1,n}$ be the eigenfunction associated to $\lambda_1(\Omega,\hat{\Gamma}_{S,n})$ and let $\hat{v}_n=v_{1,n}\circ f^{-1}$, using the test function $\hat{v}_n$ in the variational characterization of $\lambda_1(\mathbb{D},\Gamma_{S,n})$ we obtain that
\begin{equation*}
\lambda_1(\mathbb{D},\Gamma_{S,n})\leq \frac{\int_{\mathbb{D}}|\nabla \hat{v}_n|^2 dx}{\int_{\partial \mathbb{D}}\hat{v}_n^2 ds}=\frac{\int_{\Omega}|\nabla v_{1,n}|^2 dx}{\int_{\partial \Omega}v_{1,n}^2|f'| ds}.
\end{equation*}
So we have that
\begin{equation*}
\min_{z\in \partial \Omega}|f'(z)|\lambda_1(\mathbb{D},\Gamma_{S,n})\leq \frac{\int_{\Omega}|\nabla v_{1,n}|^2 dx}{\int_{\partial \Omega}v_{1,n}^2 ds}=\lambda_1(\Omega,\hat{\Gamma}_{S,n})
\end{equation*}
and finally we obtain that $\lim_{n\rightarrow \infty}\lambda_1(\Omega,\hat{\Gamma}_{S,n})=+\infty$.
\end{example}
\subsection{Continuity of eigenvalues under topological constraints}
Motivated by the Example \ref{ex2}, and also by the explicit construction that we use to prove Theorem \ref{tEM},  it is natural to ask what happened if we put some topological constraint that prevent the phenomenon of the diffusion of $\Gamma_{S,n}$ over all the boundary. In dimension $d=2$ it is natural to put the constraint on the maximal number of connected components. 

With this constraint in dimension $d=2$ we can compare the $L^1$ distance of two sets in $\partial \Omega$ with the Hausdorff distance. This comparison, in general without any topological constraint, is not possible (see \cite{HP18}). Due to this comparison, that is now possible thanks to the topological constraint, we also have a nice control on the costant $C_2$ in Theorem \ref{tSL} and we can prove a continuity result for the Steklov-Dirichlet eigenvalues. 

A similar continuity result for Dirichlet eigenvalues was proved by V. \v{S}ver\'ak in \cite{S93}. More precisely he proved the following result: let $\lambda_k(\Omega)$ the $k-$th Dirichlet eigenvalue of the set $\Omega$ and we define the following 
\begin{equation*}
\#(\Omega)=\text{number of connected components of }\Omega
\end{equation*}
\begin{theorem}[\textbf{V. \v{S}ver\'ak} \cite{HP18, S93}]\label{tS}
Let $\Omega_n\subset \mathbb{R}^2$ be a sequence of bounded open sets converging for
the Hausdorff metric to an open set $\Omega$. Assume that there exists a constant $C$ such that $\#(\Omega_n)\leq C$ for all $n$, then, for all $k$, the Dirichlet eigenvalues converge
\begin{equation*}
\lambda_k(\Omega_n)\rightarrow \lambda_k(\Omega).
\end{equation*}
\end{theorem}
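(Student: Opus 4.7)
The plan is to deduce Theorem \ref{tSS} from the planar stability estimate of Theorem \ref{tSL}. For $d=2$, that inequality reads
\begin{equation*}
|\lambda_k(\Omega,\Gamma_{S,n})-\lambda_k(\Omega,\Gamma_S)|\leq C_2\bigl(\mathcal{H}^1(\Gamma_{S,n}\triangle \Gamma_S)^{1/2}+d(\Gamma_{D,n},\Gamma_D)^{1/2}\bigr),
\end{equation*}
and the Hausdorff term vanishes by hypothesis. The theorem therefore reduces to (a) promoting the Hausdorff convergence $\Gamma_{D,n}\to\Gamma_D$ to $\mathcal{H}^1(\Gamma_{S,n}\triangle \Gamma_S)\to 0$, and (b) choosing the constant $C_2$ independently of $n$.

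For (a), I use the topological constraint decisively. Since $\partial\Omega$ is a $\mathcal{C}^{1,1}$ closed curve, I parametrize it by arclength; each $\Gamma_{D,n}$ with at most $N$ connected components becomes a union $\bigcup_{i=1}^{m_n}[a_{n,i},b_{n,i}]$ of at most $N$ disjoint closed arcs. After extracting a subsequence I may assume $m_n\equiv m$ and that each endpoint pair $(a_{n,i},b_{n,i})$ converges to $(a_i,b_i)$, possibly with $a_i=b_i$. By uniqueness of the Hausdorff limit, $\Gamma_D=\bigcup_i [a_i,b_i]$, and since $\mathcal{H}^1([a_{n,i},b_{n,i}]\triangle[a_i,b_i])\leq |a_{n,i}-a_i|+|b_{n,i}-b_i|\to 0$, summing over the finitely many arcs yields $\mathcal{H}^1(\Gamma_{S,n}\triangle\Gamma_S)=\mathcal{H}^1(\Gamma_{D,n}\triangle\Gamma_D)\to 0$. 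The bound on the number of components is essential here: without it, this $L^1$-upgrade of Hausdorff convergence fails, as Example \ref{ex2} illustrates.

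For (b) I must uniformly control the dependencies of $C_2$: $\|v_k\|_{L^2(\partial\Omega)}$, $\|v_k\|_{H^1(\Omega)}$, and $\max\{\lambda_k(\Omega,\Gamma_S),\lambda_k(\Omega,\Gamma_{S,n})\}$. The eigenvalue bound follows from the monotonicity Lemma \ref{lME}: by Hausdorff convergence, any fixed compact arc $K\subset\Gamma_S$ at positive distance from $\Gamma_D$ satisfies $K\subset\Gamma_{S,n}$ for $n$ large, hence $\lambda_k(\Omega,\Gamma_{S,n})\leq\lambda_k(\Omega,K)$, an $n$-independent upper bound. For the test functions, Lemma \ref{lTF} already provides boundary data $g_i$ with $\mathcal{C}^0$-norms bounded by a universal constant $M$ independent of $(\Gamma_S,\Gamma_{S,n})$; feeding this into Theorem 4 of \cite{S97} gives uniform $H^1$-norms for the $\widetilde{v}_i$, and standard trace inequalities yield uniform $L^2(\partial\Omega)$ bounds for the linear combination $v_k$.

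The main obstacle is securing the uniformity of Lemma \ref{lTF} as the symmetric difference shrinks. The inductive construction requires the Gram-type matrices $A_{2i\times 2i}$ to be invertible with uniformly bounded inverses, while the cutoff functions $\varphi_j$ live in $\mathcal{C}_c^\infty(\Gamma'_S\setminus\Gamma_S)$, a set whose $\mathcal{H}^1$-measure tends to zero. To handle this I would exploit the symmetry built into Lemma \ref{lTF} (which permits exchanging the roles of $\Gamma_S$ and $\Gamma_{S,n}$) and localize the construction to a fixed reference arc inside the overlap $\Gamma_S\cap\Gamma_{S,n}$, whose measure remains bounded below by $\mathcal{H}^1(K)$; combined with the uniform H\"older regularity of eigenfunctions supplied by Theorem \ref{tRE}, this keeps the matrices well-conditioned uniformly in $n$. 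Once this uniformity is secured, letting $n\to\infty$ in the stability inequality gives $\lambda_k(\Omega,\Gamma_{S,n})\to\lambda_k(\Omega,\Gamma_S)$, as desired.
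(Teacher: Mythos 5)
Your argument does not address the stated theorem. Theorem \ref{tS} is V. \v{S}ver\'ak's classical result on the \emph{Dirichlet} eigenvalues $\lambda_k(\Omega_n)$ of a sequence of planar open sets $\Omega_n$ converging in the Hausdorff metric: the domains themselves vary, and there is no Steklov condition and no boundary partition $\Gamma_S\cup\Gamma_D$ anywhere in the statement. The paper quotes this theorem from \cite{HP18,S93} without proof; it serves only as motivation for the Steklov--Dirichlet analogue. What you have written is instead a proof sketch of Theorem \ref{tSS}, which concerns $\lambda_k(\Omega,\Gamma_{S,n})$ on a \emph{fixed} domain $\Omega$ with varying $\Gamma_{S,n}\subset\partial\Omega$. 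None of the tools you invoke --- the stability estimate of Theorem \ref{tSL}, the test functions of Lemma \ref{lTF}, the monotonicity of Lemma \ref{lME} --- applies to the Dirichlet problem on varying domains, and the actual proof of \v{S}ver\'ak's theorem rests on entirely different machinery (positivity of the capacity of nondegenerate connected compact sets in the plane, leading to $\gamma$-convergence, i.e.\ Mosco convergence of the spaces $H_0^1(\Omega_n)$), none of which appears in your argument.

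For what it is worth, read as a blind proof of Theorem \ref{tSS} your outline tracks the paper's own strategy closely: you upgrade Hausdorff convergence of $\Gamma_{D,n}$ to $\mathcal{H}^1$-convergence of the symmetric differences using the bound on the number of components, you bound $\max\{\lambda_k(\Omega,\Gamma_{S,n}),\lambda_k(\Omega,\Gamma_S)\}$ via Lemma \ref{lME} applied to a common subset of all the $\Gamma_{S,n}$, and you derive the lower bound on $\int_{\partial\Omega}v_{k,n}^2\,d\mathcal{H}^1$ from the convergence $v_{k,n}\to u_k$; you even correctly flag the uniform conditioning of the matrices in Lemma \ref{lTF} as the delicate point, which the paper leaves largely implicit. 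But this does not repair the mismatch: the statement you were asked to prove is about $\lambda_k(\Omega_n)$ for the Dirichlet Laplacian, not about $\lambda_k(\Omega,\Gamma_{S,n})$, and your proposal contains no proof of it.
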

We prove now Theorem \ref{tSS} that is the equivalent of Theorem \ref{tS} in the context of Steklov-Dirichlet eigenvalues.

\begin{proof}[Proof of Theorem \ref{tSS}]
Let $M$ be a constant such that $\#(\Gamma_{D,n})\leq M$ for all $n$, let $(\partial \Omega,s)$ be the $1-$dimensional manifold endowed with the length distance $s$ and let $d_{(\partial \Omega,s)}$ be the Hausdorff distance with respect to the length distance. From the boundedness of the connected components and from the regularity assumption on $\Omega$ (the boundary is parametrized by smooth functions with uniform bounded derivatives, see Definition \ref{dOm}) we conclude that there exists a constant $C_1(M)$ that depends only on $M$ and a constant $C_2(\Omega)$ that depends only on $\Omega$, such that 
\begin{equation*}
\mathcal{H}^1(\Gamma_{D,n}\triangle \Gamma_D )\leq C_1(M)d_{(\partial \Omega,s)}(\Gamma_{D,n},\Gamma_D)\leq C_1(M)C_2(\Omega)d(\Gamma_{D,n},\Gamma_D)
\end{equation*}
and in particular 
\begin{align}\label{eHMSD}
\mathcal{H}^1(\Gamma_{D,n}\triangle \Gamma_D )&\rightarrow 0,\\ \notag
\mathcal{H}^1(\Gamma_{S,n}\triangle \Gamma_S )&\rightarrow 0.
\end{align}
From Theorem \ref{tSL} we have that there exists a sequence of constants $C_{2,n}$ such that  
\begin{equation}\label{eCCC}
|\lambda_1(\Omega,\Gamma_{S,n})-\lambda_1(\Omega,\Gamma_S)|\leq C_{2,n}\big (\mathcal{H}^1(\Gamma_{S,n} \triangle \Gamma_S)^{\frac{1}{2}}+d(\Gamma_{D,n},\Gamma_D)^{\frac{1}{2}}\big ).
\end{equation}
Now we prove that there exists a constant $L$ such that $C_{2,n}\leq L$ for all $n$. We call $v_{k,n}$ the function constructed in Theorem \ref{tSL} when we consider $\Gamma'_S=\Gamma_{S,n}$. From the proof of Theorem \ref{tSL} it is clear that $C_{2,n}\leq L$ for all $n$ if and only if there exist a constant $C>0$ such that, the three following estimates hold
\begin{equation*}
\int_{\partial \Omega} v_{k,n}^2 d\mathcal{H}^1>\frac{1}{C},
\end{equation*} 
\begin{equation*}
||v_{k,n}||_{H^1(\Omega)}\leq C
\end{equation*} 
and 
\begin{equation*}
\max \{\lambda_k(\Omega,\Gamma_{S,n}),\lambda_k(\Omega,\Gamma_S) \}\leq C.
\end{equation*} 
Suppose by contradiction that 
\begin{equation}\label{eCVK}
\liminf_{n\to \infty}\int_{\partial \Omega} v_{k,n}^2 d\mathcal{H}^1=0.
\end{equation}
Let $u_k$ be the Steklov-Dirichlet eigenfunction associated to $\lambda_k(\Omega,\Gamma_S)$. From Theorem 4 in \cite{S97}, and using similar arguments as in \eqref{eUBHN2}, we have that:
\begin{equation*}
||u_k-v_{k,n}||_{H^1(\Omega)}\leq C\big (\mathcal{H}^1(\Gamma_{S,n} \triangle \Gamma_S)^{\frac{1}{2}}+d(\Gamma_{D,n},\Gamma_D)^{\frac{1}{2}}\big ),
\end{equation*}
in particular 
\begin{align*}
v_{k,n} &\rightarrow u_k \quad \text{in} \quad H^1(\Omega),\\ \notag
v_{k,n} &\rightarrow u_k \quad \text{in} \quad L^2(\partial \Omega).
\end{align*}
From \eqref{eCVK} we conclude that $\int_{\partial \Omega} u_k^2 d\mathcal{H}^1=0$ that is a contradiction, because $u_k$ is a Steklov-Dirichlet eigenfunction. From the convergence above it is also clear that $||v_{k,n}||_{H^1(\Omega)}$ is bounded.

Now we prove an upper bound for the quantity $\max \{\lambda_k(\Omega,\Gamma_{S,n}),\lambda_k(\Omega,\Gamma_S) \}$. From \eqref{eHMSD} we know that there exist a set $\Gamma$ such that $\mathcal{H}^1(\Gamma)>0$ and a natural number $N$ such that 
\begin{equation*}
\Gamma\subset \Gamma_{S,n}\cap \Gamma_S\quad \forall\, n\geq N.
\end{equation*}
Now by Lemma \ref{lME} we conclude that:
\begin{equation*}
\max \{\lambda_k(\Omega,\Gamma_{S,n}),\lambda_k(\Omega,\Gamma_S) \}\leq \lambda_k(\Omega,\Gamma)\quad \forall\, n\geq N.
\end{equation*}
We finally proved that $C_{2,n}\leq L$ for all $n$. From the boundedness of $C_{2,n}$, from \eqref{eCCC}, from \eqref{eHMSD} and from the assumption that $\Gamma_{D,s}$ Hausdorff converge to $\Gamma_D$ we conclude that 
\begin{equation*}
\limsup_{n\to \infty} |\lambda_1(\Omega,\Gamma_{S,n})-\lambda_1(\Omega,\hat{\Gamma}_S)|\leq \limsup_{n\to \infty} C_{2,n}\big (\mathcal{H}^1(\Gamma_{S,n} \triangle \hat{\Gamma}_S)^{\frac{1}{2}}+d(\Gamma_{D,n},\hat{\Gamma}_D)^{\frac{1}{2}}\big )=0.
\end{equation*}
This concludes the proof
\end{proof}
Using this continuity result we can now prove the following existence Theorem:
\begin{theorem}\label{tEMCC}
Let $\Omega\subset \mathbb{R}^2$ be a $\mathcal{C}^{1,1}$ open domain, let $0<m_1<1$ and $m_2\in\mathbb{N}$ be two constants, then the following problem
\begin{equation*}
\sup \{ \lambda_k(\Omega,\Gamma_S) \;| \; \mathcal{H}^1(\Gamma_S)=m_1\mathcal{H}^1(\partial \Omega)\; \text{and}\; \#(\Gamma_S)\leq m_2 \}
\end{equation*}
has a solution.
\end{theorem}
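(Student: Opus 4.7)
The plan is to apply the direct method of the calculus of variations, using Theorem \ref{tSS} as the continuity ingredient. I start with a maximizing sequence $\Gamma_{S,n}\subset\partial\Omega$ satisfying $\mathcal{H}^1(\Gamma_{S,n})=m_1\mathcal{H}^1(\partial\Omega)$ and $\#(\Gamma_{S,n})\leq m_2$, and set $\Gamma_{D,n}=\partial\Omega\setminus\Gamma_{S,n}$. Because $\partial\Omega$ is a disjoint union of smooth Jordan curves and, on each such curve, $\Gamma_{S,n}$ and $\Gamma_{D,n}$ alternate, the topological bound $\#(\Gamma_{S,n})\leq m_2$ immediately yields $\#(\Gamma_{D,n})\leq M$ for some $M$ depending only on $m_2$ and the topology of $\partial\Omega$.

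Next, since $\partial\Omega$ is compact, the Blaschke selection theorem supplies a subsequence (not relabelled) along which $\Gamma_{D,n}\to\Gamma_D$ in the Hausdorff metric for some compact set $\Gamma_D\subset\partial\Omega$; set $\Gamma_S=\partial\Omega\setminus\Gamma_D$. The standard upper semicontinuity of the number of connected components under Hausdorff limits of compact sets gives $\#(\Gamma_D)\leq M$, which in turn forces $\#(\Gamma_S)\leq m_2$ after taking the complement on the 1-manifold $\partial\Omega$. For the measure constraint I invoke the comparison established in the opening lines of the proof of Theorem \ref{tSS}, namely
\begin{equation*}
\mathcal{H}^1(\Gamma_{D,n}\triangle\Gamma_D)\leq C(M,\Omega)\, d(\Gamma_{D,n},\Gamma_D)\longrightarrow 0,
\end{equation*}
which implies $\mathcal{H}^1(\Gamma_S)=\lim_n \mathcal{H}^1(\Gamma_{S,n})=m_1\mathcal{H}^1(\partial\Omega)$. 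Hence $\Gamma_S$ is admissible.

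At this stage I apply Theorem \ref{tSS} directly to the sequence $\Gamma_{D,n}\to\Gamma_D$ (the assumptions of that theorem are exactly the uniform bound on components and Hausdorff convergence) and obtain
\begin{equation*}
\lambda_k(\Omega,\Gamma_{S,n})\longrightarrow \lambda_k(\Omega,\Gamma_S),
\end{equation*}
so the supremum is attained at $\Gamma_S$; in particular it is finite, which rules out the divergence phenomenon observed in Theorem \ref{tNEU} and Example \ref{ex2}.

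The only genuinely delicate point is the simultaneous passage to the limit of both the topological constraint and the measure equality. The bound on the components follows from the general upper semicontinuity of $\#(\cdot)$ under Hausdorff convergence of compact sets, but the equality $\mathcal{H}^1(\Gamma_S)=m_1\mathcal{H}^1(\partial\Omega)$ rests essentially on the bounded-component hypothesis, which is exactly what allows Hausdorff distance to control the symmetric-difference measure in dimension $2$; without it the limiting set could have strictly smaller measure. Once this point is secured, Theorem \ref{tSS} closes the argument at no extra cost.
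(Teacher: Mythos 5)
Your proof is correct and follows the same direct-method skeleton as the paper (maximizing sequence, compactness, admissibility of the limit set, conclusion via Theorem \ref{tSS}), but the compactness step is genuinely different. The paper works with the characteristic functions $\chi_{\Gamma_{S,n}}$: the measure and component constraints give a uniform $BV(\partial\Omega)$ bound, BV compactness yields $L^1(\partial\Omega)$ convergence (so the measure constraint passes to the limit for free), and only afterwards is the Hausdorff convergence of $\Gamma_{D,n}$, needed to invoke Theorem \ref{tSS}, deduced by comparing Hausdorff distance with $\mathcal{H}^1(\Gamma_{D,n}\triangle\hat\Gamma_D)$. You reverse the order: Blaschke selection gives Hausdorff convergence at once, Theorem \ref{tSS} applies immediately, and the measure constraint is recovered from $\mathcal{H}^1(\Gamma_{D,n}\triangle\Gamma_D)\leq C(M,\Omega)\,d(\Gamma_{D,n},\Gamma_D)$, which is legitimate since the component bound holds for $\Gamma_{D,n}$ and, by stability of connected components under Hausdorff limits, for $\Gamma_D$. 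Your route only uses the direction ``Hausdorff controls symmetric difference'' already justified in the proof of Theorem \ref{tSS}, while the paper's route gets the measure constraint with no work but must upgrade $L^1$ to Hausdorff convergence. Two points to tighten: what you call ``upper semicontinuity'' is the inequality $\#(\Gamma_D)\leq\liminf_n\#(\Gamma_{D,n})$; and deducing $\#(\Gamma_S)\leq m_2$ from $\#(\Gamma_D)\leq M$ by bare complementation only gives $\#(\Gamma_S)\leq M+b$, with $b$ the number of boundary curves, so you should argue on each curve separately, where the components of $\Gamma_S$ and $\Gamma_D$ alternate in equal number, giving $\#(\Gamma_S)\leq\liminf_n\#(\Gamma_{S,n})\leq m_2$; this is the same level of detail the paper delegates to \cite{HP18}.
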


\begin{proof}
We define $\mathcal{A}=\{\Gamma_S\subset \partial\Omega | \; \mathcal{H}^1(\Gamma_S)=m_1\mathcal{H}^1(\partial \Omega)\; \text{and}\; \#(\Gamma_S)\leq m_2 \}$ and we consider a maximizing sequence $\Gamma_{S,n}$, we define also the following compact sets $\Gamma_{D,n}=\partial \Omega\setminus \Gamma_{S,n}$. For all $n$ we know that $\Gamma_{S,n}\in \mathcal{A}$, so from the constraint on the measure and the constraint on the number of connected components we have that
\begin{equation*}
||\chi_{\Gamma_{S,n}}||_{BV(\partial \Omega)}= \mathcal{H}^1(\Gamma_{S,n})+\mathcal{H}^0(\Gamma_{S,n})\leq m_1\mathcal{H}^1(\partial \Omega)+2m_2\quad \forall\, n. 
\end{equation*}
From the compactness property of the space $BV(\partial \Omega)$ we have that there exists a set $\hat{\Gamma}_S\subset \partial \Omega$ such that, up to a subsequence
\begin{equation*}
\chi_{\Gamma_{S,n}}\rightarrow \chi_{\hat{\Gamma}_S}\quad \text{in} \quad L^1(\partial \Omega).
\end{equation*}
We define the set $\hat{\Gamma}_D=\partial \Omega\setminus \hat{\Gamma}_D$, from the convergence above we conclude that: 
\begin{align}\label{eHMSD2}
\mathcal{H}^1(\Gamma_{S,n}\triangle \hat{\Gamma}_S )&\rightarrow 0,\\ \notag
\mathcal{H}^1(\Gamma_{D,n}\triangle \hat{\Gamma}_D )&\rightarrow 0.
\end{align}
Let $s$ be the length distance on $\partial \Omega$,  for all $x_1\in \partial \Omega $ and $x_2\in \partial \Omega $ we have that $|x_1-x_2|\leq s(x_1,x_2)$ and in particular
\begin{equation}\label{eHDHDM2}
d(\Gamma_{D,n},\hat{\Gamma}_D)\leq d_{(\partial \Omega,s)}(\Gamma_{D,n},\hat{\Gamma}_D ).
\end{equation}
Where we denoted by $d_{(\partial \Omega,s)}(\Gamma_1,\Gamma_2)$ the Hausdorff distance with respect the length distance on $\partial \Omega$. The boundary $\partial \Omega$ is a $1-$dimensional manifold, so it is clear that
\begin{equation}\label{eHDHM2}
d_{(\partial \Omega,s)}(\Gamma_{D,n},\hat{\Gamma}_D)\leq \mathcal{H}^1(\Gamma_{D,n}\triangle \hat{\Gamma}_D ),
\end{equation}
so from \eqref{eHMSD2} we conclude that $\Gamma_{D,s}$ Hausdorff converge to $\hat{\Gamma}_D$. From Theorem \ref{tSS} we conclude that 
\begin{equation*}
\lambda_k(\Omega,\Gamma_{S,n})\rightarrow \lambda_k(\Omega,\hat{\Gamma}_S).
\end{equation*}
The fact that $\hat{\Gamma}_S\in \mathcal{A}$ is straightforward, the measure constraint comes directly from \eqref{eHMSD2} and the bounds on the number of connected components is preserved by the Hausdorff distance (see \cite{HP18}).
\end{proof}

\bigskip\noindent

{\bf Acknowledgements}: 
The author is grateful to A. Henrot for proposing the problem, for fruitful discussions and for remarks on the preliminary version of the manuscript. The author is also grateful to D. Bucur for fruitful discussions about Theorem \ref{tEM}. This work was supported by the project ANR-18-CE40-0013 SHAPO financed by the French Agence Nationale de la Recherche (ANR).

\bibliographystyle{abbrv}
\bibliography{Ref}

\end{document}